\newtheorem{lm}{Lema}
\newtheorem{thm}[lm]{Theorem}
\newtheorem{lem}[lm]{Lemma}
\newtheorem{cor}[lm]{Corollary}
\newtheorem{defin}[lm]{Definition}
\newcommand{\RR}{\mathbb{R}}
\newcommand{\Exp}{\text{Exp }}
\begin{document}

\title{Nonlocal diffusion equations in Carnot groups}

\author{I. E. Cardoso\footnote{Fac. de Cs. Exactas, Ingenier\'ia y Agrimensura, Pellegrini 250, Rosario, Santa Fe, Argentina, $isolda@fceia.unr.edu.ar$.}
            - 
           R. E. Vidal\footnote{Fac. de Matem\'atica, Astronom\'ia y F\'isica, Medina Allende s/n, C\'ordoba, C\'ordoba, Argentina, $vidal@mate.uncor.edu.ar$.}
}

\date{}

\maketitle

\begin{abstract}
Let $G$ be a Carnot group. We study nonlocal diffusion equations in a domain $\Omega$ of $G$ of the form 
$$
u_t^\epsilon(x,t)=\int_{G}\frac{1}{\epsilon^2}K_{\epsilon}(x,y)(u^\epsilon(y,t)-u^\epsilon(x,t))\,dy, \qquad x\in \Omega
$$
with $u^\epsilon=g(x,t)$ for $x\notin\Omega$. For an appropriated rescaled kernel $K_\epsilon$ we prove that solutions $u^\epsilon$, when $\epsilon\rightarrow0$,  uniformly approximate the solution of different local Dirichlet problem in $G$. The key tool used is the Taylor series development for a function defined on a Carnot group.
\end{abstract}

\section{Introduction}\label{sec:Intro}
The nonlocal diffusion problems in the Euclidean space $\RR^n$ have been recently widely used to model diffusion processes. More precisely, in \cite{Fi} the authors consider some $u(x, t)$ that models the  probabilist density function of a single population at the point $x$ at time $t$. Let $J$ be a symmetric function  with $\int_{\RR^n}J(x)\,dx=1$; $J(x-y)$ is the probability distribution of jumping from location $y$ to location $x$; $J\ast u(x,t) = \int_{\mathbb{R}^n}J(y-x)u(y,t)\,dy$ is the rate at which individuals are arriving to position $x$ from all other places, and $-u(x,t)=\int\limits_{\RR^{n}}J(x-y)u(x,t)dy$ is the rate at which they are leaving location $x$ to travel to all other sites. Then $u$ satisfies a nonlocal evolution equation of the form
\begin{align}\label{0.1}
u_t(x,t) & =J\ast u(x,t)-u(x,t).
\end{align}

In the work \cite{CER} the authors prove that solutions of properly rescaled nonlocal Dirichlet problems of the equation \eqref{0.1}  uniformly approximate the solution of the corresponding Dirichlet problem for the classical heat equation in $\RR^n$.

These type of problems have been used to model very different applied situations,
for example in biology \cite{CF} and \cite{ML}, image processing \cite{KOJ}, particle systems \cite{BV},
coagulation models \cite{FL}, etc.

In the context of the Euclidean space $\RR^n$ some of these results have been generalized for kernels that are not convolution. It is interesting how this kind of problems translate to other settings. For example, in \cite{Vi} the author considers a nonlocal diffusion problem on the Heisenberg group and analogous results to those obtained in the works \cite{CCR} and \cite{CER}. In our work we will consider the following problems (which are originally set in $\mathbb{R}^{n}$), in the more general context of the Carnot groups (let us recall that both the Euclidean space and the Heisenberg group are examples of Carnot groups):
\begin{itemize}
\item  In the work \cite{MR} the authors prove that smooth solutions to the Dirichlet problem for the parabolic equation
$$
v_t(x,t)=\sum_{i,j}^na_{i,j}(x)\frac{\partial^2 v(x,t)}{\partial x_i\partial x_j}+\sum_i^nb_i\frac{\partial v(x,t)}{\partial x_i}, \qquad x\in\Omega,
$$
with $v(x,t) = g(x, t)$, $x \in\partial\Omega$, can be uniformly approximated by solutions of nonlocal problems of the form 
$$
u_t^\epsilon(x,t)=\int_{\RR^n}K_{\epsilon}(x,y)(u^\epsilon(y,t)-u^\epsilon(x,t))\,dy, \qquad x\in \Omega
$$
with $u^\epsilon(x,t) = g(x, t)$ for $x\notin \Omega$ as $\epsilon \to 0$, for an appropriate rescaled kernel $K_\epsilon$.

\item On the other hand, in \cite{SLY} the authors consider the next Fokker-Planck equation
$$v_{t}(x,t)=\sum\limits_{i=1}^{n} (a(x)v(x,t))_{x_{i}x_{i}},\qquad x\in \Omega,$$
with $u(x,t) = g(x, t)$ for $x\notin \Omega$ and the coefficients $a\in C^{\infty}(\RR^{n})$.
They prove that the solutions of this problem can be uniformly approximated by the solutions of the non-local problem
$$u_{t}(x,y)=\int\limits_{R^{n}} a(y)J(x-y)u(y,t)dy - a(x)u(x,t),\qquad x\in \Omega, $$  
properly rescaled, where $\int\limits_{\RR^{n}}J(x)dx=1$ and $u(x,t) = g(x, t)$ for $x\notin \Omega$.

\end{itemize}

In this way, in \cite{MR} and \cite{SLY} the authors show that the usual local
evolution problems with spatial dependence can be approximated by nonlocal ones.

The study of Carnot groups and PDE's on them has been increasing in the last years, since the topology is similar to the Euclidean topology and the hypoelliptic equations are easily defined (see the fundamental work of H\"{o}rmander \cite{H}). Regularity results, study of fundamental solutions, computation of a priori estimates, study of asymptotic behaviour, and other aspects of PDEs in this context, and mainly for the subLaplacian and for the heat operator, can be found in, for example, the works \cite{BLU}, \cite{BBLU}, \cite{CG}, \cite{BF},  \cite{DG}, \cite{R}, and references therein. Let us remark that this list is by no means exhaustive, since the literature on these matters is really extensive.

A Carnot group is a simply connected and connected Lie group $G$, whose Lie algebra $\mathfrak{g}$ is stratified, this means that $\mathfrak{g}$ admits a vector space decomposition $\mathfrak{g}=V_1\oplus \cdots\oplus V_m$ with grading $[V_1,V_j]=V_{j+1}$, for $j=1,\cdots, m-1,$ and has a family of dilations $\{\delta_\epsilon\}_{\epsilon>0}$ such that $\delta_\epsilon X=\epsilon^j X$ if $X\in V_j$.

Let be $\{X_1,\dots,X_{n_1}\}$ a basis of $V_1$ and $\{X_{n_1+1},\dots,X_{n_1+n_2}\}$ a basis of $V_2$ and let $\Omega\subset G$ be a bounded $C^{2+\alpha}$, $0<\alpha<1$, domain (that is, open and connected). The smoothness condition on $\Omega$ means that the boundary is the graph of a $C^{2+\alpha}$ function, which precise definition will be given in section \ref{subsec:Str.T.Ineq}.

We consider the following second order local parabolic differential equation with Dirichlet boundary conditions
\begin{equation}\label{0.2}%
\begin{cases}
v_{t}(x,t)=\sum\limits_{i=1}^{n_1}\sum\limits_{j=1}^{n_1}a_{ij}(x)X_i X_j v(x,t)+\sum\limits_{i=1}^{n_1+n_2}b_{i}(x)X_iv(x,t),\quad & x\in \Omega, t>0,\\
v(x,t)=g(x,t), & x\in \partial\Omega, t>0,\\
v(x,0)=u_{0}(x), & x\in\Omega,
\end{cases}
\end{equation}
where the coefficients $a_{ij}(x)$, $b_i(x)$ are smooth in $\overline\Omega$ and $(a_{ij}(x))$ is a symmetric positive definite matrix, i.e., $\sum_{i,j} a_{i,j}(x)\xi_i\xi_j\geq \nu|\xi|^2$ for every real vector $\xi = (\xi_1,\dots,\xi_{n_1}) \neq 0$ and for some $\nu>0$. Also we have the following nonlocal rescaled Dirichlet problem
\begin{equation}\label{0.3}%
\begin{cases}
u^{\epsilon}_{t}(x,t)=\mathcal{K}_{\epsilon}(u^\epsilon)(x,t),\qquad & x\in \Omega,\quad t>0,\\
u^{\epsilon}(x,t)=g(x,t), & x\notin \Omega,\quad t>0,\\
u^{\epsilon}(x,0)=u_{0}(x), & x\in\Omega,
\end{cases}
\end{equation}
where $\mathcal{K}_{\epsilon}$ is a nonlocal operator defined by a rescaled kernel (see section \ref{subsec:EvolEqn.kernel}). We will prove the next Theorem:

\begin{thm}\label{thm:MainMR}
Let $u^{\epsilon}$ be the solution of problem \eqref{0.3} where $\mathcal{K_{\epsilon}}$ is defined by formula \eqref{Kepsilon}, $g\in C^{2+\alpha,1+\alpha}(\Omega^{c}\times [0,T])$ and $u_{0}\in C^{2+\alpha}(\Omega)$. Then there exists a positive constant $c$ such that 
\begin{align*}
 ||u^{\epsilon}-v||_{L^{\infty}(\Omega\times[0,T])}\le c\epsilon^{\alpha},
\end{align*}
where $v$ is the solution of problem \eqref{0.2}.
\end{thm}

We also study the  Fokker-Planck parabolic problem with Dirichlet condition
\begin{equation}\label{0.4}%
\begin{cases}
v_{t}(x,t)=\sum\limits_{i=1}^{n_1}X_{i}X_{i}(a(\cdot)v(\cdot,t))(x),\qquad & x\in \Omega,\quad t>0,\\
v(x,t)=g(x,t), & x\in \partial\Omega,\quad t>0,\\
v(x,0)=u_{0}(x), & x\in\Omega,
\end{cases}
\end{equation}
where the coefficient $a\in C^{\infty}(G)$; and the nonlocal reescaled Dirichlet problem given by 
\begin{equation}\label{0.5}%
\begin{cases}
u^{\epsilon}_{t}(x,t)=\mathcal{L}_{\epsilon}(u^\epsilon)(x,t),\qquad & x\in \Omega,\quad t>0,\\
u^{\epsilon}(x,t)=g(x,t), & x\notin \Omega,\quad t>0,\\
u^{\epsilon}(x,0)=u_{0}(x), & x\in\Omega,
\end{cases}
\end{equation}
where $\mathcal{L}_{\epsilon}$ is defined in section \ref{subsec:SYL}.  We will prove the next Theorem:
\begin{thm}\label{thm:MainSYL}
Let $u^{\epsilon}$ be the solution of problem \eqref{0.5} where $\mathcal{L_{\epsilon}}$ is defined by formula \eqref{Lepsilon}, $g\in C^{2+\alpha,1+\alpha}(\Omega^{c}\times [0,T])$ and $u_{0}\in C^{2+\alpha}(\Omega)$. Then there exists a positive constant $c$ such that 
\begin{align*}
 ||u^{\epsilon}-v||_{L^{\infty}(\Omega\times[0,T])}\le c\epsilon^{\alpha},
\end{align*}
where $v$ is the solution of problem \eqref{0.4}.
\end{thm}
 
Such results as those of Theorems \ref{thm:MainMR} and \ref{thm:MainSYL} allows us to approximate the solutions of flow equations in Carnot groups.

It is important to stress that here we will use that \eqref{0.2} and \eqref{0.4} have smooth solutions. In fact, under regularity assumptions on the boundary data $g$, the domain $\Omega$ and the initial condition $u_0$, we have that the solutions of \eqref{0.2} are $C^{2+\alpha,1+\alpha/2}(\Omega\times[0,T])$. For such a regularity result, we refer to the previously cited articles in page 3 and also the works \cite{WLL} and  \cite{XZ}.

\par The rest of the paper is organized as follows. In section \ref{sec:Prelim} we recall some definitions and results on Carnot groups and set the notation to be used later. In section \ref{sec:kernels} we define and study the operators $\mathcal{K}_{\epsilon}$ and $\mathcal{L}_{\epsilon}$. In section \ref{sect3.1} we study the existence, uniqueness and other properties of the solutions of the problems \eqref{0.3} and \eqref{0.5}. In section \ref{sec:Proofs} we prove the Main Theorems \ref{thm:MainMR} and \ref{thm:MainSYL}.

\section{Preliminaries}\label{sec:Prelim}

\subsection{Homogeneous Lie groups}\label{subsec:Hom.L.Grps.}

\par Let $\mathfrak{g}$ be a real Lie algebra of finite dimension $n$ and let $G$ be its corresponding connected and simply connected Lie group. Recall that if $G$ is nilpotent, the exponential map $\exp:\mathfrak{g}\to G$ is a diffeomorphism.

\par If $\mathfrak{g}$ is nilpotent and we choose a basis $\{X_{1},\dots,X_{n}\}$ for $\mathfrak{g}$, we can identify $\mathbb{R}^{n}$ with the group $G$ via the exponential map: let $\varphi:\mathbb{R}^{n}\to G$ be such that every $(x_{1},\dots,x_{n})\in\mathbb{R}^{n}$ is identified with $\varphi(t_{1},\dots,t_{n})=\exp(t_{1}X_{1}+\dots+t_{n}X_{n})$. Observe that $\varphi^{-1}$ defines a global chart on the Carnot group $G$. Since the Campbell-Hausdorff-Baker series has finitely many terms, the group law is a polynomial map and may be written as $xy=(p_{1}(x,y),\dots,p_{n}(x,y))$, where the $p_{j}$ are polynomials maps.

\par A \textit{family of dilations} on $\mathfrak{g}$ is a one parameter family of automorphisms $\{\delta_{r}:r>0\}$ of $\mathfrak{g}$  of the form $\delta_{r}=\Exp(A\log r)$, where $\Exp$ denotes the matrix exponential function, and  $A$ is a diagonalizable linear transformation on $\mathfrak{g}$ with positive eigenvalues. Thus, \begin{align*}
\delta_{r}(X)=&\Exp(AX\log r)=\sum\limits_{l=0}^{\infty}\frac{1}{l!}(\log r AX)^{l}.\end{align*} 
 If a Lie algebra admits a family of dilations then $\mathfrak{g}$ is nilpotent (see \cite{FS} for example), and $G$ is called an \textit{homogeneous Lie group}. Let us remark that not every nilpotent Lie algebra admits a family of dilations (see \cite{D}).

\par If $G$ is an homogeneous Lie group it is nilpotent, hence the dilations on $\mathfrak{g}$ lift via the exponential map to a one parameter group of automorphisms on $G$. We also call the maps $\exp \circ \, \delta_{r} \circ (\exp)^{-1}$ \textit{dilations} on $G$ and denote them again by $\delta_{r}$. The \textit{homogeneous dimension} of $G$ is defined to be the number $Q=trace(A)=\lambda_{1}+\dots +\lambda_{n}$ where $\lambda_{1},\dots,\lambda_{n}$ are the eigenvalues of $A$. Since $\Exp(\alpha A \log r)=\delta_{r^{\alpha}}$ for any $\alpha>0$, we may assume that the smallest eigenvalue of $A$ is $1$, and moreover, we may assume that $1=\lambda_{1}\le\dots\le \lambda_{n}=\overline{\lambda}$. Let us choose $\beta=\{X_{1},\dots,X_{n}\}$ a basis of eigenvectors  of $A$  with $AX_{j}=\lambda_{j} X_{j}$ where the automorphism $A$ is diagonal: $$[A]_{\beta}=\begin{pmatrix} \lambda_{1} & \dots & 0 \\ \vdots & \ddots & \vdots \\ 0 & \dots & \lambda_{n} \end{pmatrix},$$ hence also, since the dilations are automorphisms, $$[\delta_{r}]_{\beta}=\begin{pmatrix} r^{\lambda_{1}} & \dots & 0 \\ \vdots & \ddots & \vdots \\ 0 & \dots & r^{\lambda_{n}} \end{pmatrix},$$
then $r^{\lambda_{j}}$ is an eigenvalue for $\delta_{r}$ and $X_{j}$ is an associated eigenvector. We have that $\delta_{r}[X_{i},X_{j}]=[\delta_{r}X_{i},\delta_{r}X_{j}]=r^{\lambda_{i}+\lambda_{j}}[X_{i},X_{j}]$.

\par A Lie algebra $\mathfrak{g}$ is called \textit{graded} if it is endowed with a vector space decomposition $\mathfrak{g}=\bigoplus\limits_{j=1}^{\infty}V_{j}$ (where all but finitely many of the $V_{k}$'s are $\{0\}$), such that $[V_{i},V_{j}]\subset V_{i+j}$. The Lie group $G$ is also called \textit{graded}. 
\par  A graded Lie algebra $\mathfrak{g}$ is said to be \textit{stratified} if it admits a vector space decomposition as follows: there exists $m\le n$ such that $\mathfrak{g}=V_{1}\oplus\dots\oplus V_{m}$, where $V_{k+1}=[V_{1},V_{k}]\neq\{0\}$, for all $1\le k< m$ and $V_{k}=\{0\}$ if $k>m$. This also means that $V_{1}$ generates $\mathfrak{g}$ as a Lie algebra. A stratified Lie algebra is nilpotent of step $m$ and there is a natural family of dilations on $\mathfrak{g}$ given by $\delta_{r}\left(\sum\limits_{k=1}^{m}Y_{k}\right)=\sum\limits_{k=1}^{m}r^{k}Y_{k}$, where each $Y_{k}\in V_{k}$. The Lie group $G$ is also called \textit{stratified} or \textit{Carnot group}.

\par In the case of a stratified Lie algebra $\mathfrak{g}$, the following notation will be used:
\begin{itemize}
\item the set of eigenvalues of $A$ is $\mathcal{A}=\{1,2,\dots,m\}$,
\item the set of eigenvalues for each $\delta_{r}$, $r>0$, is $\{r^{1},\dots,r^{m}\}$,
\item the basis $\beta=\{X_{1},\dots,X_{n}\}$ of $\mathfrak{g}$ is adapted to the gradation in the following sense: if $\dim(V_{k})=n_{k}$ for $1\le k\le m$, then $n=n_{1}+\dots+n_{m}$, and 
	\begin{itemize}
		\item $\beta_{1}=\{X_{1},\dots,X_{n_{1}}\}$ is a basis of $V_{1}$ of eigenvectors associated to the eigenvalue $\lambda=1$, 
		\item $\beta_{2}=\{X_{n_{1}+1},\dots,X_{n_{1}+n_{2}}\}$ is a basis of $V_{2}$ of eigenvectors associated to the eigenvalue $\lambda=2$, 
		\item $\dots$
		\item $\beta_{m}=\{X_{n_{1}+\dots+n_{m-1}+1},\dots,X_{n}\}$ is a basis of $V_{m}$ of eigenvectors associated to the eigenvalue $\lambda=m$.
	\end{itemize}
\item the homogeneous dimension is $Q=\sum\limits_{k=1}^{m}kn_{k}$.
\item the identification $\phi=(\phi_{1},\dots,\phi_{n}):\mathfrak{g}\to\mathbb{R}^{n}$ such that if $X=t_{1}X_{1}+\dots + t_{n}X_{n}\in\mathfrak{g}$, then $\phi(X)=(t_{1},\dots,t_{n})$, and $\phi_{j}(X)=t_{j}$. Observe that if $x\in G$, $\phi_{j}(\exp^{-1}(x))=\pi_{j}(\varphi^{-1}(x))$, where $\pi_{j}:\mathbb{R}^{n}\to\mathbb{R}$ denotes the projection and $\varphi$ is the inverse of the global chart on the group. 
\end{itemize}

\par Let us consider until the end of the section an homogeneous Lie algebra $\mathfrak{g}$ with homogeneous Lie group $G$. 

\par We define an Euclidean norm $||.||$ on  $G$ as follows: if $\{X_{1},\dots,X_{n}\}$ is a basis of $\mathfrak{g}$, just define $||.||$ by establishing that $X_{i}$ and $X_{j}$ are orthogonal for all $1\le i, j \le n$, $i\neq j$, then lift it to $G$ via the exponential map, that is, for $x\in G$ define $||x||=||\exp^{-1}x||$. For practical reasons we will use an \textit{homogeneous norm} $|.|$ which we construct as follows:  if $X=\sum\limits_{1}^{n}t_{j}X_{j} \in \mathfrak{g}$ then $||\delta_{r}X||=\left( \sum\limits_{1}^{n} t_{j}^{2} r^{2\lambda_{j}} \right)^{\frac{1}{2}}$. If $X\neq 0$ then $||\delta_{r}X||$ is a strictly increasing function of $r$ which tends to $0$ or $\infty$ along with $r$. Hence there is a unique $r(X)>0$ such that $||\delta_{r(X)}X||=1$, and we set $|0|=0$ and $|x|=\frac{1}{r}||\exp^{-1}x||$ for $x\neq 0$. 

\par The Lebesgue measure on $\mathfrak{g}$ induces a biinvariant Haar measure on $G$, and we fix the normalization of Haar measure on $G$ by requiring that the measure of the unitary ball to be $1$. We shall denote with $|E|$ the measure of a measurable set $E$ and with $\int f=\int f dx$ the integral of a function $f$ with respect to this measure. Hence, $|\delta_{r}(E)|=r^{Q}|E|$ and $d(rx)=r^{Q}dx$. In particular, $|B(r,x)|=r^{Q}$ for all $r>0$ and $x\in G$.

\par A function $f$ on $G\backslash\{0\}$ will be called \textit{homogeneous of degree $\sigma$} if $f\circ\delta_{r}=r^{\sigma}f$ for $r>0$. For any $f$ and $g$ we have that $\int f(x)(g\circ\delta_{r})(x)dx=r^{-Q}\int(f\circ\delta_{\frac{1}{r}})(x)g(x)dx$, if the integrals exist. Hence we extend the map $f\to f\circ{\delta_{r}}$ to distributions as follows: $<f\circ\delta_{r},\varphi>=r^{-Q}<f,\varphi\circ\delta_{\frac{1}{r}}>$, for a distribution $f$ and a test function $\varphi$. We say that a distribution is \textit{homogeneous of degree $\sigma$} if $f\circ_{\delta_{r}}=r^{\sigma}f$. A differential operator $D$ on $G$ is \textit{homogeneous of degree $\sigma$} if $D(f\circ\delta_{r})=r^{\sigma}(Df)\circ\delta_{r}$ for any $f$. Observe that if $D$ is homogeneous of degree $\sigma$ and $f$ is homogeneous of degree $\mu$ then $Df$ is homogeneous of degree $\mu-\sigma$.

\par The approximations to the identity in this context take the following form: if $\psi$ is a function on $G$ and $t>0$, we define $\psi_{t}=t^{-Q}\psi\circ\delta_{\frac{1}{t}}$. Observe that if $\psi\in L^{1}$ then $\int\psi_{t}(x)dx$ is independent of $t$.

\par We will also use the following multiindex notation: if $I=(i_{1},\dots,i_{n})\in\mathbb{N}_{0}^{n}$, we set $X^{I}=X_{1}^{i_{1}}X_{2}^{i_{2}}\dots X_{n}^{i_{n}}$. The operators $X^{I}$ form a basis for the algebra of left invariant differential operators on $G$, by the Poincar\'{e}-Birkhoff-Witt Theorem. The order of the differential operators $X^{I}$ is $|I|=i_{1}+i_{2}+\dots+i_{n}$ and its \textit{homogeneous degree} is $d(I)=\lambda_{1}i_{1}+\lambda_{2}i_{2}+\dots +\lambda_{n}i_{n}$. Finally, let $\triangle$ be the additive subsemigroup of $\mathbb{R}$ generated by $0,\lambda_{1},\dots,\lambda_{n}$. Observe that $\triangle=\{d(I):I\in\mathbb{N}^{n}\}\supset\mathbb{N}$ (since $\lambda_{1}=1$), and if $G$ is a Carnot group $\triangle=\mathbb{N}$.

\par Finally, if $G$ is a Carnot group, it is clear that $X\in\mathfrak{g}$ is homogeneous of degree $k$ if and only if $X\in V_{k}$. We have defined the basis $\beta$ of eigenvectors such that $X_{1},\dots,X_{n_{1}}$ is a basis for $V_{1}$. Let us now define $\mathcal{J}=\sum\limits_{j=1}^{n_{1}}X_{j}^{2}$, thus $-\mathcal{J}$ is a left invariant second order differential operator which is homogeneous of degree 2 called the subLaplacian of $G$ (relative to the stratification and the basis). Its role on $G$ is analogous to (minus) the ordinary Laplacian in $\mathbb{R}^{n}$.

\subsection{Taylor polynomials in homogeneous Lie groups}

\par Now we are going to recall some concepts and notations on the definition of Taylor polinomials for homogeneous Lie groups from \cite{FS}. We say that a function $P$ on $G$ is a \textit{polynomial} if $P\circ\exp$ is a polynomial on $\mathfrak{g}$. Let $\{\xi_{1},\dots,\xi_{n}\}$ be the basis for the linear forms on $\mathfrak{g}$ dual to the basis $\{X_{1},\dots,X_{n}\}$ on $\mathfrak{g}$. Let us consider $\eta_{j}=\xi_{j}\circ\exp^{-1}$, then $\eta_{1},\dots,\eta_{n}$ are polynomials on $G$ which form a global coordinate system on $G$, and generate the algebra of polynomials on $G$. Thus, every polynomial on $G$ can be written uniquely as $P=\sum_{I}a_{I}\eta^{I}$, for $\eta^{I}=\eta_{1}^{i_{1}}\dots\eta_{n}^{i_{n}}$, $a_{I}\in\mathbb{C}$ where all but finitely many of them vanish. Since $\eta^{I}$ is homogeneous of degree $d(I)$, the set of possible degrees of homogeneity for polynomials is the set $\triangle$. We call the degree of a polynomial $\max\{|I|:a_{I}\neq 0\}$ the \textit{isotropic degree}, and its \textit{homogeneous degree} is $\max\{d(I):a_{I}\neq 0\}$. For each $N\in\mathbb{N}$ we define the space $\mathcal{P}_{N}^{iso}$ of polynomials of isotropic degree $\le N$, and for each $j\in\triangle$ we define the space $\mathcal{P}_{j}$ of polynomials of homogeneous degree $\le j$. It follows that $\mathcal{P}_{N}\subset \mathcal{P}_{N}^{iso}\subset \mathcal{P}_{\overline{\lambda}N}$. The space $\mathcal{P}_{j}$ is invariant under left and right translations (see Prosition 1.25 of \cite{FS}), but the space $\mathcal{P}_{N}^{iso}$ is not (unless $N=0$ or $G$ is abelian). For a function $f$ whose derivatives $X^{I}f$ are continuous functions on a neighbourhood of a point $x\in G$, and for $j\in\triangle$ such that $d(I)\le j$ we define the \textit{left Taylor polynomial of $f$ at $x$ of homogeneous degree $j$} to be the unique polynomial $P\in\mathcal{P}_{j}$ such that $X^{I}P(e)=X^{I}f(x)$. Here, we have that $X^{I}f(x) = \frac{\partial^{d(I)}}{\partial^{i_{1}}t_{1}\dots\partial^{i_{n}}t_{n}} \left.f\left(x\exp\sum\limits_{j=1}^{n}t_{j}X_{j}\right)\right|_{t_{1}=\dots=t_{n}=0}$. From now on we are going to consider every Taylor polynomial as a left Taylor polynomial, hence we will drop the word \textit{left}.

\par We will be using the Taylor polynomial of a function $f$ at a point $x\in G$ of homogeneous degree 2, hence we will explicitly show its form with the notation we have presented. Let us call it $P=P_{f,x,2}=\sum\limits_{I\in\mathbb{N}^{n}:d(I)\le 2}a_{I}\eta^{I}$. If the multiindex $I\in\mathbb{N}^{n}$ has homogeneous degree $d(I)=0$, $I=\overline{0}$. If $d(I)=1$ then $I=\overline{e_{j}}$ for $1\le j \le n_{1}$ (where $\overline{e_{j}}$ denotes the canonical unitary vectors of $\mathbb{R}^{n}$, whose $i-$component is defined as $\delta_{ij}$), and if $d(I)=2$ then either $I=\overline{e_{i}}+\overline{e_{j}}$ for $1\le i,j \le n_{1}$ or $I=\overline{e_{j}}$ for $n_{1}+1\le j \le n_{1}+n_{2}$. Hence, for $y\in G$, $$P(y)=a_{\overline{0}} + \sum\limits_{j=1}^{n_{1}+n_{2}} a_{\overline{e_{j}}} \eta^{\overline{e_{j}}}(y) + \sum\limits_{i,j=1}^{n_{1}} a_{\overline{e_{i}}+\overline{e_{j}}} \eta^{\overline{e_{i}}+\overline{e_{j}}}(y).$$ In order to explicitly state the coefficients we perform some straightforward computations, namely:
\begin{itemize}
\item[-] If $I=\overline{0}$ then since $X^{\overline{0}}P(e)=X^{\overline{0}} f(x)$ we have that $a_{\overline{0}}=f(x)$.
\item[-] Let us consider $I=\overline{e_{j}}$ for $1\le j \le n_{1}+n_{2}$. Then for $y=\exp \sum\limits_{l=1}^{n}t_{l}X_{l}$ we have that $\eta^{\overline{e_{j}}}(y)=(\xi_{j}\circ\exp^{-1})(y)=t_{j}=(\pi_{j}\circ\varphi^{-1})(y)$. Since $X^{\overline{e_{j}}}P(e)=X^{\overline{e_{j}}} f(x)$ it follows that $a_{\overline{e_{j}}}=X_{j}f(x)$.
\item[-] Similarly, if we consider $I=\overline{e_{i}}+\overline{e_{j}}$ for $1\le i,j \le n_{1}$ we have that $\eta^{\overline{e_{i}}+\overline{e_{j}}}(y)=t_{i}t_{j}=(\pi_{i}\circ\varphi^{-1})(y)(\pi_{j}\circ\varphi^{-1})(y)$. And from the equality $X^{\overline{e_{i}}+\overline{e_{j}}}P(e)=X^{\overline{e_{i}}+\overline{e_{j}}} f(x)$ it follows that $a_{\overline{e_{i}}+\overline{e_{j}}}+a_{\overline{e_{j}}+\overline{e_{i}}}=X_{i}X_{j}f(x)$.
\end{itemize}
We are now able to present the Taylor polynomial $P$ in a more familiar form: $$P(y)=f(x) + \sum\limits_{j=1}^{n_{1}+n_{2}} (\pi_{j}\circ\varphi^{-1})(y) X_{j}f(x)  + \frac{1}{2} \sum\limits_{i,j=1}^{n_{1}} (\pi_{i}\circ\varphi^{-1})(y)(\pi_{j}\circ\varphi^{-1})(y) X_{i}X_{j}f(x).$$ And if we are considering coordinates,
\begin{align}\label{Taylor.2}
P\left(\exp \left(\sum\limits_{l=1}^{n}t_{l}X_{l}\right)\right) = & f(x) + \sum\limits_{j=1}^{n_{1}+n_{2}} t_{j} X_{j}f(x)  + \frac{1}{2} \sum\limits_{i,j=1}^{n_{1}} t_{i}t_{j} X_{i}X_{j}f(x).
\end{align}

\subsubsection{Stratified Taylor inequality}\label{subsec:Str.T.Ineq}

\par Throughout this section we will consider a fixed stratified group $G$ with the notation described previously. We will regard the elements of the basis of $\mathfrak{g}$ adapted to the gradation as left invariant differential operators on $G$.

\par Since $V_{1}$ generates $\mathfrak{g}$ as a Lie algebra, we have that $\exp(V_{1})$ generates $G$. More precisely:
\begin{lem}[Lemma 1.40 of \cite{FS}]\label{Lem:strat.coords}
If $G$ is stratified there exist $C>0$ and $N\in\mathbb{N}$ such that any $x\in G$ can be expressed as $x=x_{1}\dots x_{N}$ with $x_{j}\in\exp(V_{1})$ and $|x_{j}|\le C|x|$, for all $j$.
 \end{lem}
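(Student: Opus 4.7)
The proof uses three features of a Carnot group: (i) $V_1$ generates $\mathfrak{g}$ as a Lie algebra, so iterated brackets of elements of $V_1$ span every $V_k$; (ii) the Baker--Campbell--Hausdorff (BCH) formula terminates after finitely many terms since $G$ is $m$-step nilpotent; and (iii) the dilations $\delta_r$ are group automorphisms preserving each stratum and satisfy $|\delta_r x|=r|x|$. My plan is first to reduce by dilation to the unit sphere, and then to construct the decomposition stratum by stratum, realizing deeper strata as iterated group commutators of elements of $\exp(V_1)$.

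\textbf{Step 1: Reduction by dilation.} Since $\delta_r(V_1)=V_1$, one has $\delta_r(\exp(V_1))=\exp(V_1)$. It is therefore enough to establish the decomposition on the unit sphere $\{|x|=1\}$: for a general $x\ne e$ set $y=\delta_{1/|x|}(x)$, so $|y|=1$; if $y=y_1\cdots y_N$ with $y_j\in\exp(V_1)$ and $|y_j|\le C$, then $x=\delta_{|x|}(y_1)\cdots\delta_{|x|}(y_N)$ with each factor in $\exp(V_1)$ and homogeneous norm at most $C|x|$.

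\textbf{Step 2: Decomposition on the unit sphere.} For $|x|=1$, write $\exp^{-1}(x)=Y_1+\cdots+Y_m$ with $Y_k\in V_k$; the scaling of $|\cdot|$ forces every $\|Y_k\|$ to be uniformly bounded. By iteratively applying BCH one may recast $x=\exp(Y_1)\cdot\exp(\tilde Y_2)\cdots\exp(\tilde Y_m)$ with $\tilde Y_k\in V_k$ and $\|\tilde Y_k\|$ uniformly bounded. The first factor $\exp(Y_1)$ already lies in $\exp(V_1)$. For $k\ge 2$, fix a basis of $V_k$ consisting of $k$-fold iterated brackets of elements of $V_1$; expanding $\tilde Y_k$ in this basis and absorbing the scalar coefficients into the bracket arguments (using the multilinearity of the bracket) gives a representation $\tilde Y_k=\sum_{\ell}[X_1^\ell,[X_2^\ell,\ldots,[X_{k-1}^\ell,X_k^\ell]\ldots]]$ with each $X_i^\ell\in V_1$ of bounded norm. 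One then realizes each exponential of such a bracket as an iterated group commutator of elements of $\exp(V_1)$ via the identity (obtained by repeated BCH in the nilpotent group)
\[
[\exp X_1,[\exp X_2,\ldots,[\exp X_{k-1},\exp X_k]\ldots]]=\exp([X_1,[X_2,\ldots,[X_{k-1},X_k]\ldots]])\cdot R,
\]
where the remainder factor $R$ belongs to $\exp\bigl(V_{k+1}\oplus\cdots\oplus V_m\bigr)$.

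\textbf{Main obstacle.} The principal difficulty is precisely the remainder $R$: an iterated commutator of exponentials does not exactly hit the desired bracket, but only up to corrections in strata strictly deeper than $V_k$. I would resolve this by a downward induction on the stratum, first fixing the $V_m$-component of $x$ (no corrections are possible there, since $V_j=0$ for $j>m$), then the $V_{m-1}$-component (whose corrections land in $V_m$, already neutralized), and so on down to $V_2$. At each stratum the number of additional $\exp(V_1)$-factors introduced is controlled by $\dim V_k$ and $k$, so the total $N$ and the constant $C$ depend only on $G$. Since all manipulations respect the dilation scaling, each factor satisfies $|x_j|\le C$ for $|x|=1$; combined with Step 1 this gives $|x_j|\le C|x|$ in general.
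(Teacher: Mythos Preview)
The paper does not prove this lemma; it is quoted verbatim from Folland--Stein \cite{FS} as a background fact, so there is no in-paper argument to compare against. I therefore assess your sketch on its own merits and against the standard argument.

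Your dilation reduction (Step~1) and the BCH factorization $x=\exp(Y_1)\exp(\tilde Y_2)\cdots\exp(\tilde Y_m)$ with bounded $\tilde Y_k\in V_k$ are correct and standard, and realizing each $\exp(\tilde Y_k)$ through iterated group commutators of elements of $\exp(V_1)$ is the right mechanism. The genuine gap is the direction of your induction and the claim ``already neutralized.'' When you pass from $V_m$ to $V_{m-1}$, the remainder $R$ produced while expressing $\exp(\tilde Y_{m-1})$ as a product of commutators is a \emph{new} element of $\exp(V_m)$; nothing you did earlier for $\exp(\tilde Y_m)$ cancels it. It must itself be written as a further product of $\exp(V_1)$-factors. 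Continuing downward, each stage $V_k$ throws fresh corrections into $V_{k+1},\dots,V_m$, forcing you to revisit strata you declared finished; the bookkeeping showing that the total number $N$ of factors remains bounded is precisely what is missing.

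The clean fix is to run the induction \emph{upward}, from $V_2$ to $V_m$. At stage $k$ you replace $\exp(\tilde Y_k)$ by a product $P_k$ of iterated group commutators of $\exp(V_1)$-elements times a correction lying in $\exp(V_{k+1}\oplus\cdots\oplus V_m)$; that correction is absorbed via BCH into the still-unprocessed tail $\exp(\tilde Y_{k+1})\cdots\exp(\tilde Y_m)$, producing new bounded $\tilde Y_{k+1}',\dots,\tilde Y_m'$. At stage $m$ there is no correction. This is essentially the argument in \cite{FS}, and with this orientation the rest of your outline goes through, with $N$ and $C$ depending only on $m$ and the dimensions $n_k$.
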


\par For $k\in\mathbb{N}$ we define $C^{k}(G)$ to be the space of continuous functions $f$ on $G$ whose derivatives $X^{I}f$ are continuous functions on $G$ for $d(I)\le k$. Also, for $0<\alpha<1$ we define the space $C^{k+\alpha}(G)$ as the function $f$ in $C^{k}(G)$ where
$$
\sup_{x,y\in G, d(I)=k}|X^If(xy)-X^If(x)|<C |y|^\alpha,
$$
with $C$ independent of $x$ and $y$.

\par Let us also define the space $C^{k}(\Omega)$ of those functions $f$ defined on $\Omega$ such that $Df$ is continuous for every differential operator $D$ of homogeneous degree less or equal to $k$.

\par Another important consequence of the fact that $V_{1}$ generates $\mathfrak{g}$ is that the set of left invariant differential operators which are homogeneous of degree $k$ (which is the linear span of $\{X^{I}:d(I)=k\}$) is precisely the linear span of the operators $X_{i_{1}}\dots X_{i_{k}}$ with $1\le i_{j}\le n_{1}$ for $j=1,\dots,k$. We thus have the following results:
\begin{thm}[Theorem 1.41, Stratified Mean Value Theorem, \cite{FS}]\label{Thm:SMVT} \noindent \par Suppose $G$ is stratified. There exist $C>0$ and $b>0$ such that for all $f\in C^{1}$ and all $x,y\in G$, $$|f(xy)-f(x)|\le C|y|\sup\limits_{|z|\le b|y|, 1\le k\le n_{1}}|X_{k}f(xz)|.$$
\end{thm}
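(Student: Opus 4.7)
The plan is to reduce the statement to a sum of one-dimensional estimates along horizontal curves, using the stratified decomposition provided by Lemma \ref{Lem:strat.coords}. Given $y\in G$, that lemma writes $y=y_{1}\cdots y_{N}$ with $y_{j}\in\exp(V_{1})$ and $|y_{j}|\le C|y|$ for every $j$. Because $V_{1}$ is the $\lambda=1$ eigenspace of $A$, the dilation $\delta_{r}$ acts on $V_{1}$ by multiplication by $r$, so the homogeneous norm of an element of $\exp(V_{1})$ agrees with the Euclidean norm of its preimage in $V_{1}$. Hence I can write $y_{j}=\exp(s_{j}U_{j})$ where $s_{j}=|y_{j}|$ and $U_{j}=\sum_{k=1}^{n_{1}}a_{j,k}X_{k}$ is a unit vector of $V_{1}$, i.e.\ $\sum_{k}a_{j,k}^{2}=1$.

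First I would telescope and set $w_{j}:=xy_{1}\cdots y_{j-1}$, so that
$$f(xy)-f(x)=\sum_{j=1}^{N}\bigl[f(w_{j}\exp(s_{j}U_{j}))-f(w_{j})\bigr].$$
Since $U_{j}\in\mathfrak{g}$, Lemma \ref{Lem:higher.order.deriv} (with $k=1$) and the fundamental theorem of calculus give
$$f(w_{j}\exp(s_{j}U_{j}))-f(w_{j})=\int_{0}^{s_{j}}(U_{j}f)(w_{j}\exp(tU_{j}))\,dt=\int_{0}^{s_{j}}\sum_{k=1}^{n_{1}}a_{j,k}(X_{k}f)(w_{j}\exp(tU_{j}))\,dt.$$
Applying Cauchy--Schwarz to $(a_{j,k})_{k}$ against $((X_{k}f)(\cdot))_{k}$ and using $\sum_{k}a_{j,k}^{2}=1$ yields
$$|f(w_{j}\exp(s_{j}U_{j}))-f(w_{j})|\le \sqrt{n_{1}}\,s_{j}\sup_{0\le t\le s_{j},\,1\le k\le n_{1}}|(X_{k}f)(w_{j}\exp(tU_{j}))|.$$

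The remaining step is to recognize each evaluation point as $xz$ for some $z$ with $|z|\le b|y|$. Writing $w_{j}\exp(tU_{j})=xz_{j,t}$ with $z_{j,t}=y_{1}\cdots y_{j-1}\exp(tU_{j})$, and invoking the quasi-triangle inequality $|uv|\le c(|u|+|v|)$ that the homogeneous norm satisfies on any Carnot group, I would iterate $j\le N$ times. Since $|\exp(tU_{j})|=t\le s_{j}\le C|y|$ and $|y_{i}|\le C|y|$, this produces a constant $b=b(N,C,c)$ such that $|z_{j,t}|\le b|y|$ for every $j$ and every $0\le t\le s_{j}$. Summing the $N$ telescoped pieces and bounding each $s_{j}\le C|y|$ gives
$$|f(xy)-f(x)|\le (NC\sqrt{n_{1}})\,|y|\sup_{|z|\le b|y|,\,1\le k\le n_{1}}|X_{k}f(xz)|,$$
which is the inequality with constants $C':=NC\sqrt{n_{1}}$ and $b$ as above.

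The main obstacle is keeping the geometric control uniform in $x$ and $y$: one must check that the iterated quasi-triangle inequality really produces a single constant $b$ independent of the base point, and one must confirm that the particular homogeneous norm fixed in Section \ref{subsec:Hom.L.Grps.} satisfies $|\exp(V)|=\|V\|_{\mathrm{Eucl}}$ for $V\in V_{1}$ (this is immediate from its construction via $\|\delta_{r}X\|$ and the eigenvalue $\lambda=1$ on $V_{1}$, but it is the one place where the stratified structure is essential). Everything else is calculus along horizontal line segments, which is the standard advantage of working in a Carnot group.
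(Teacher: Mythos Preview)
The paper does not supply its own proof of this statement; it is quoted as a preliminary result from Folland--Stein \cite{FS}, Theorem~1.41. Your argument is correct and is essentially the proof given in \cite{FS}: decompose $y$ via Lemma~\ref{Lem:strat.coords} into a product of at most $N$ factors in $\exp(V_{1})$, telescope $f(xy)-f(x)$ accordingly, apply the one-variable fundamental theorem of calculus along each horizontal segment, and control the intermediate points by iterating the quasi-triangle inequality for the homogeneous norm. The two places you flag as needing care (that $|\exp V|=\|V\|$ for $V\in V_{1}$ under the norm of Section~\ref{subsec:Hom.L.Grps.}, and that the iterated quasi-triangle constant depends only on $N$, $C$, and $c$) are both fine and are exactly the points handled in \cite{FS}.
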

\begin{thm}[Theorem 1.42, Stratified Taylor Inequality, of \cite{FS}]\label{Thm:STI} \noindent \par Suppose $G$ is stratified. For each positive integer $k$ there is a constant $C_{k}$ such that for all $f\in C^{k}$ and all $x,y\in G$, $$|f(xy)-P_{x}(y)|\le C_{k}|y|^{k}\eta(x,b^{k}|y|),$$ where $P_{x}$ is the Taylor polynomial of $f$ at $x$ of homogeneous degree $k$, $b$ is as in the Stratified Mean Value Theorem, and for $r>0$, $$\eta(x,r)=\sup\limits_{|z|\le r, d(I)=k} |X^{I}f(xz)-X^{I}f(x)|.$$
\end{thm}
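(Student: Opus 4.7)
I will argue by induction on $k$, using the Stratified Mean Value Theorem (Theorem~\ref{Thm:SMVT}) as both the base case and the main engine of the inductive step.

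For the base case $k=1$, the space $\mathcal{P}_{1}$ consists of polynomials of homogeneous degree at most $1$, so the left Taylor polynomial is $P_{x}(y)=f(x)+\sum_{j=1}^{n_{1}}X_{j}f(x)\,\eta_{j}(y)$. Since $\eta_{j}$ is homogeneous of degree $1$ and $X_{\ell}$ is homogeneous of degree $1$ for $\ell\le n_{1}$, the function $X_{\ell}\eta_{j}$ is homogeneous of degree $0$, hence constant, and equals $\delta_{j\ell}$ by evaluation at $e$. Setting $h(y):=f(xy)-P_{x}(y)$, this gives $h(e)=0$ and $X_{\ell}h(z)=X_{\ell}f(xz)-X_{\ell}f(x)$ for all $z\in G$ and $\ell\le n_{1}$; applying Theorem~\ref{Thm:SMVT} to $h$ then yields $|h(y)|\le C|y|\,\eta(x,b|y|)$ directly.

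For the inductive step, assume the inequality holds for $k-1$ and fix $f\in C^{k}$. Let $P_{x}$ be the left Taylor polynomial of $f$ at $x$ of homogeneous degree $k$, and set $h(y):=f(xy)-P_{x}(y)$, so $h(e)=0$. The crucial observation is that for each $j\le n_{1}$, the polynomial $X_{j}P_{x}$ coincides with the left Taylor polynomial of $X_{j}f$ at $x$ of homogeneous degree $k-1$: since $X_{j}$ has homogeneous degree $1$, we have $X_{j}P_{x}\in\mathcal{P}_{k-1}$, and for every multi-index $I$ with $d(I)\le k-1$ the Poincar\'e--Birkhoff--Witt rearrangement of $X^{I}X_{j}$ as a linear combination of standard monomials $X^{K}$ (all with $d(K)=d(I)+1\le k$) yields, using the defining property of $P_{x}$,
$$X^{I}(X_{j}P_{x})(0)=\sum_{K}c_{K}\,X^{K}P_{x}(0)=\sum_{K}c_{K}\,X^{K}f(x)=X^{I}(X_{j}f)(x),$$
so uniqueness identifies $X_{j}P_{x}$ with the claimed Taylor polynomial. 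Applying the inductive hypothesis to each $X_{j}f\in C^{k-1}$, and using the same PBW identity (applied to $X^{I}X_{j}$ with $d(I)=k-1$) to bound the corresponding modulus of continuity by a constant multiple of $\eta(x,\cdot)$, we obtain
$$|X_{j}h(z)|=|X_{j}f(xz)-X_{j}P_{x}(z)|\le C\,|z|^{k-1}\eta(x,b^{k-1}|z|)$$
for every $z\in G$ and $j\le n_{1}$.

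Finally, applying Theorem~\ref{Thm:SMVT} to $h$, and using $h(e)=0$, gives
$$|h(y)|\le C|y|\sup_{|z|\le b|y|,\,j\le n_{1}}|X_{j}h(z)|\le C'|y|\cdot|y|^{k-1}\eta(x,b^{k}|y|)=C_{k}|y|^{k}\eta(x,b^{k}|y|),$$
which completes the induction. The main obstacle is the identification $X_{j}P_{x}=P^{k-1}_{x}(X_{j}f)$: it hinges on carrying PBW rearrangements through the defining equalities of the left Taylor polynomial, and is precisely what allows the induction to close on itself with the correct dependence on $b^{k}$ and on the modulus of continuity of the top-order derivatives $X^{I}f$.
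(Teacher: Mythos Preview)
The paper does not give its own proof of this statement; it is quoted verbatim as Theorem~1.42 of \cite{FS} and used as a black box in the subsequent discussion. Your argument---induction on $k$ driven by the Stratified Mean Value Theorem, with the key identification that $X_{j}P_{x}$ is the left Taylor polynomial of $X_{j}f$ at $x$ of homogeneous degree $k-1$ (justified via PBW rearrangement of $X^{I}X_{j}$)---is correct and is in fact exactly the proof given in Folland--Stein, so there is nothing to compare.
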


\par For a function $f\in C^{k}(\Omega)$ and $x\in\Omega$ let $P=P_{f,x,k}$ denote the Taylor polynomial of $f$ at $x$ of homogeneous degree $k$. By Theorem \ref{Thm:STI} we have that for $\epsilon>0$,
\begin{align}\label{vani}
\frac{1}{\epsilon^{k}}|f(x\delta_{\epsilon} y)-P(y)|\le & \frac{c_{k}|\delta_{\epsilon}y|^{k}}{\epsilon^{k}} \eta(x,b^{k}|\delta_{\epsilon}y|) = c_{k} |y|^{k} \eta(x,b^{k}|\delta_{\epsilon}y|),
\end{align}
which goes to $0$ as $\epsilon$ does.

\par Hence,  if $f$ in $C^{2+\alpha}(G)$, with $0<\alpha<1$ we have the following Taylor expansion of $f$ at $x$ of homogeneous degree $k=2$: for 
\begin{align}\label{simple.Taylor.expansion} \nonumber
f\left(x\exp\left(\sum\limits_{l=1}^{n}t_{l}X_{l}\right)\right)= & f(x)+\sum\limits_{j=1}^{n_{1}+n_{2}}t_{j}X_{j}f(x) \\+ &  \frac{1}{2} \sum\limits_{i,j=1}^{n_{1}}t_{i}t_{j}X_{i}X_{j}f(x)+o(|t_{1}X_{1}+\dots+t_{n}X_{n}|^{2}),
\end{align}
in the sense that
\begin{align*}
\lim\limits_{\epsilon\to 0} \frac{o(|\delta_{\epsilon}(t_{1}X_{1}+\dots+t_{n}X_{n})|^{2})}{\epsilon^{2}}=0.&
\end{align*}
Indeed, by \eqref{Taylor.2}, \eqref{vani} and the fact that $f\in C^{2+\alpha}(G)$, we get
\begin{align}\label{vani2}
&\left|\frac{o(|\delta_{\epsilon}(t_{1}X_{1}+\dots+t_{n}X_{n})|^{2})}{\epsilon^{2}}\right|  \\\nonumber
& \leq  c |\exp(t_{1}X_{1}+\dots+t_{n}X_{n})|^{2} \eta(x,b^{2}|\exp(\delta_{\epsilon}(t_{1}X_{1}+\dots+t_{n}X_{n}))|)  \\\nonumber
& = c |\exp(t_{1}X_{1}+\dots+t_{n}X_{n})|^{2} \sup\limits_{|z|\le b^{2}|\exp(\delta_{\epsilon}(t_{1}X_{1}+\dots+t_{n}X_{n}))|, d(I)=2} |X^{I}f(xz)-X^{I}f(x)| \\\nonumber
& \leq c |\exp(t_{1}X_{1}+\dots+t_{n}X_{n})|^{2}  b^{2}|\exp(\delta_{\epsilon}(t_{1}X_{1}+\dots+t_{n}X_{n}))|^\alpha  \\\nonumber
& = c b^{2}|\exp(t_{1}X_{1}+\dots+t_{n}X_{n})|^{2+\alpha} \epsilon^\alpha.
\end{align}

\par Before we move forward, let us remark that throughout the work we will denote with $c$ a positive constant that may vary from line to line.

\section{Some nonlocal diffusion problems}\label{sec:kernels}

\par Throughout this section we let $G$ be a Carnot group with Lie algebra $\mathfrak{g}$ and let $\Omega$ be an open, bounded and connected subset of $G$. The aim of this section is to properly define the operators $\mathcal{K}_{\epsilon}$ and $\mathcal{L}_{\epsilon}$ from the introduction. In order to understand the techniques involved, we will first work with an evolution operator of a much simpler form (namely the operator given in \eqref{0.1}), in the context of the Carnot group $G$. We will see that the solutions to the nonlocal Dirichlet rescaled problems uniformly approximate the solution of the classical heat equation with Dirichlet conditions.

\par Let us consider a positive and radial function $J\in L^{1}(G)$ with compact support $F$, normalized such that $\int_G J dx =1$,
whence for $i=1,\dots,n$
\begin{align}\label{J.x}
&\int\limits_{\mathbb{R}^{n}} J(\exp(t_{1}X_{1}+\dots+t_{n}X_{n})) t_{i} dt_{1}\dots dt_{n} =0;
\end{align}
and also
\begin{align}\label{J.x^2}
&\int\limits_{\mathbb{R}^{n}} J(\exp(t_{1}X_{1}+\dots+t_{n}X_{n})) t_{i}^{2} dt_{1}\dots dt_{n} = C(J),
\end{align}
for a constant $C(J)>0$, $i=1,\dots,n$. From both properties it follows that for $i,j=1,\dots,n$, 
\begin{align}\label{J.deltaij}
&\int\limits_{\mathbb{R}^{n}} J(\exp(t_{1}X_{1}+\dots+t_{n}X_{n})) t_{i}t_{j} dt_{1}\dots dt_{n} = C(J)\delta_{ij}.
\end{align}

\subsection{An evolution equation}\label{subsec:EvolEqn}

\par The evolution equation \eqref{0.1} is given in our context by the \textit{evolution operator} 
\begin{align}\label{evol.op}
\mathcal{E}u=J\ast u - u,
\end{align}
namely for a suitable domain $\Omega\times[0,T]$,
\begin{align}\label{evol.eqn}
u_{t}(x,t) =  & \mathcal{E}u(x,t).
\end{align}

\par For each $\epsilon >0$ we define the rescaled operator 
\begin{align}\label{reesc.evol.op}
\mathcal{E}_{\epsilon}u(x)=\frac{1}{\epsilon^{2}}\left[(u\ast J_{\epsilon})(x)-u(x)\right],
\end{align}
we have that
\begin{align} \nonumber
\mathcal{E}_{\epsilon}u(x)=&\frac{1}{\epsilon^{2}}\left[(u\ast J_{\epsilon})(x)-u(x)\right] = \frac{1}{\epsilon^{2}}\left[\int\limits_{G}u(xy^{-1})J_{\epsilon}(y)dy-u(x)\right] \\ \nonumber
=& \frac{1}{\epsilon^{2}}\left[\int\limits_{G}u(xy^{-1})\frac{1}{\epsilon^{Q}}J\left(\delta_{\frac{1}{\epsilon}}y\right)dy-u(x)\right] \\ \nonumber
=& \frac{1}{\epsilon^{2}}\left[\int\limits_{G}u(x(\delta_{\epsilon}(y))^{-1})\frac{\epsilon^{Q}}{\epsilon^{Q}}J(y)dy-\int\limits_{G}J(y)u(x)dy\right]\\ 
=& \frac{1}{\epsilon^{2}}\int\limits_{G}\left[u(x(\delta_{\epsilon}(y^{-1})))-u(x)\right]J(y)dy. \label{reesc.evol.eqn}
\end{align}

\subsection{An evolution equation given by a kernel}\label{subsec:EvolEqn.kernel}

\par In \cite{MR} Molino and Rossi studied the integral operator
\begin{align*}
\mathcal{K_\epsilon}u(x)=\int\limits_{G}K_\epsilon(x,y)(u(y)-u(x))dy,
\end{align*}
for $G=\mathbb{R}^{n}$, where the kernel $K_\epsilon(x,y)$ is a positive function with compact support in $\Omega\times \Omega$ for $\Omega\subset G$ a bounded domain such that $0<\sup\limits_{y\in\Omega}K_\epsilon(x,y)=c_\epsilon(x)\in L^{\infty}(\Omega)$.

\par Following the ideas of Molino and Rossi, let us consider:
\begin{itemize}
\item A function $J$ as defined in the begining of the section.
\item A $n_{1}\times n_{1}$ symmetric and positive definite matrix $\tilde A(x)=(a_{ij}(x))$, where the coefficients are smooth in $\overline\Omega$ with $\tilde A(x)=\tilde L(x)\tilde L^{t}(x)$ its Cholesky factorization, with $\tilde L(x)=(l_{ij}(x))$ and $\tilde L^{-1}(x)=(l^*_{ij}(x))$. Also, let $A(x)$ be the $n\times n$ matrix defined by blocks as follows: \begin{align*} A(x)= & \left( \begin{array}{c|c}\tilde A(x) & 0 \\ \hline 0 & \begin{array}{ccc} 1 & & 0 \\ & \ddots & \\ 0 & & 1 \end{array}  \end{array} \right). \end{align*} That is, $A(x)$ is the matrix $\tilde A(x)$ extended by the identity to size $n\times n$, and let $L(x)$ and $L^{t}(x)$ be similarly defined.
\item A $n\times n$ diagonal matrix $W(x)=\text{diag}(\tilde{b}_{1}(x),\dots,\tilde{b}_{n}(x))$ where $\tilde{b}_{i}(x)=b_{i}(x)$ if $1\le i\le n_{1}$, $\tilde{b}_{i}(x)=\frac{b_{i}(x)}{\epsilon^{2}}$ if $n_{1}<i\le n_{1}+n_{2}$ and $\tilde{b}_{i}(x)=1$ if $n_{1}+n_{2}<i\le n$.

\item A function $a:G\to \mathbb{R}$ defined by $a(x)=\sum\limits_{i=1}^{n} \phi_{i}(\exp^{-1}(x)) +M$, where $M>0$ is large enough to ensure $a(x)\ge \beta>0$ for $x$ belonging to an appropiate set $F'$ defined as 
\begin{align}\label{F'} F'&=\{x\in G: x=y\exp\delta_{\epsilon}L(y)\exp^{-1}(z^{-1}), \forall y\in\Omega, \forall z\in F\}, \end{align}
where $F$ is the support of $J$.
\end{itemize}

Thus, we will work with the scaled kernels defined for each $\epsilon>0$ by
\begin{align}\nonumber
K_{\epsilon}(x,y) = & \frac{c(x)}{\epsilon^{Q+2}}a((\exp(E(x)\exp^{-1}(y^{-1}x)))^{-1}) \\ \label{kernel}
& \qquad \times J(\exp(L^{-1}(x)\exp^{-1}(\delta_{\epsilon^{-1}}y^{-1}x))),
\end{align}
where for $x\in G$, $c(x)=\frac{2}{C(J)M(\det(A(x)))^{\frac{1}{2}}}$ and $E(x)=\frac{M}{2}W(x)A(x)^{-1}$. Let us remark that we understand the action of a $n\times n$ matrix $\textbf{M}$ on $\mathfrak{g}$ via the identification $\phi$ with $\mathbb{R}^{n}$ (with respect to the basis $\beta$) as follows: if $\textbf{M}=(m_{ij})$ and $X=\sum x_{i}X_{i}\in\mathfrak{g}$, $$\textbf{M}X=\sum\limits_{i=1}^{n}\left(\sum_{k=1}^{n} m_{ik}x_{k}\right)X_{i}.$$
Also, since the matrix $A(x)$, $L(x)$ and $W(x)$ are defined by blocks (with corresponding blocks of the same size), and the matrix which defines $\delta_{\epsilon}$ is also defined by blocks (again, of the same corresponding sizes) as a constant times the identity on each block, we have that $\delta_{\epsilon}$ commutes with all of them.

\par Hence, for these rescaled kernels we will study the integral operators
\begin{align}\nonumber
\mathcal{K}_{\epsilon}u(x)= & \frac{c(x)}{\epsilon^{Q+2}}\int\limits_{G}a((\exp(E(x)\exp^{-1}(y^{-1}x)))^{-1}) \\ \label{Kepsilon}
& \quad \times J(\exp(L^{-1}(x)\exp^{-1}(\delta_{\epsilon^{-1}}y^{-1}x)))(u(y)-u(x))dy.
\end{align}
More precisely, we will prove that $\mathcal{K}_{\epsilon}u$ approximates $\mathcal{K}v$ where $\mathcal{K}$ is the second order operator defined by
\begin{align}\label{operator}
\mathcal{K}(v)(x) = & \sum\limits_{i,j=1}^{n_{1}}a_{ij}(x)X_{i}X_{j}v(x)+\sum\limits_{i=1}^{n_{1}+n_{2}} b_{i}(x)X_{i}v(x).
\end{align}

\subsection{A reaction-diffusion equation}\label{subsec:SYL}

\par In \cite{SLY} the authors work in the same spirit as Molino and Rossi to approximate the solutions of the Fokker-Planck equation by solutions of operators defined by reescaled kernels which in our present context assume the form, respectively:
\begin{align}\label{operator1}
\mathcal{L}(v)(x)=\sum\limits_{i=1}^{n}X_{i}X_{i}(a(x)v(x)),
\end{align}
\begin{align}\label{Lepsilon}
\mathcal{L}_{\epsilon}(u)(x)=&\frac{2C(J)}{\epsilon^{Q+2}} \int\limits_{G} J(\delta_{\epsilon^{-1}}y^{-1}x)[a(y)u(y)-a(x)u(x)]dy,
\end{align}
with the coefficient $a\in C^{\infty}(G)$.

\section{Existence and uniqueness of solutions}\label{sect3.1}

We shall now derive the existence and uniqueness of
solutions of 
\begin{align}\label{pro2}
\left\{ \begin{array}{rclcc}
u^{\epsilon}_{t}(x,t) & = & \int_G K_\epsilon(x,y)\left(u(y,t)-u(x,t)\right)dy & \mbox{ for } & (x,t)\in\Omega\times[0,T],\\
u^{\epsilon}(x,t) & = & g(x,t) & \mbox{ for } & x\notin\Omega, t\in[0,T],\\
u^{\epsilon}(x,0) & = & u_{0}(x) & \mbox{ for } & x\in\Omega,
\end{array}
\right.
\end{align}
which is a consequence of Banach's fixed point theorem. The main arguments are basically the same of \cite{CCR} or \cite{CER}, but we write them here to make the paper self-contained. Let us also remark that the analogous results for operator $\mathcal{L}_{\epsilon}$ holds and the proofs are completely similar.

\par Recall the definition of the set $F'$ \eqref{F'}.

\begin{thm}\label{31}
Let $u_0\in L^1(\Omega)$ and let $J$ and $K_\epsilon$ defined as in Section \ref{subsec:EvolEqn.kernel}, with $K_\epsilon(x,y)\leq C_\epsilon(x)\in L^\infty(\Omega)$ for $(x,y)\in\Omega\times F'$. Then there exists a unique solution $u$ of problem \eqref{pro2} such that $u\in C([0,\infty),L^1(\Omega))$.
\end{thm}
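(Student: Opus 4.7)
The plan is a standard Banach fixed-point argument, faithful to the strategy of \cite{CCR,CER}. Fix $t_{0}>0$ to be chosen small, and work in the complete metric space $X_{t_{0}}:=C([0,t_{0}],L^{1}(\Omega))$ with the sup-in-time $L^{1}$-norm. For each $w\in X_{t_{0}}$, I would extend $w$ to $G\setminus\Omega$ by $g(\cdot,t)$ so that the nonlocal operator $\mathcal{K}_{\epsilon}w(\cdot,t)$ is defined on $\Omega$. I would then introduce the Duhamel-type map
\begin{equation*}
T_{u_{0}}(w)(x,t)=u_{0}(x)+\int_{0}^{t}\int_{G}K_{\epsilon}(x,y)\bigl(w(y,s)-w(x,s)\bigr)\,dy\,ds,\qquad (x,t)\in\Omega\times[0,t_{0}],
\end{equation*}
whose fixed points are exactly the solutions of \eqref{pro2} on $[0,t_{0}]$.

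The core step is to verify that $T_{u_{0}}$ is a strict contraction on $X_{t_{0}}$ for $t_{0}$ small enough. Since any two candidates $w_{1},w_{2}\in X_{t_{0}}$ coincide with $g$ off $\Omega$, their difference vanishes outside $\Omega$. Combining this with the hypothesis $K_{\epsilon}(x,y)\le C_{\epsilon}(x)$, $C_{\epsilon}\in L^{\infty}(\Omega)$, on $\Omega\times F'$, together with the fact that the compact support of $J$ forces $K_{\epsilon}(x,\cdot)$ to be compactly supported for each $x$, Fubini's theorem yields
\begin{equation*}
\|T_{u_{0}}w_{1}-T_{u_{0}}w_{2}\|_{X_{t_{0}}}\le M_{\epsilon}\,t_{0}\,\|w_{1}-w_{2}\|_{X_{t_{0}}},
\end{equation*}
for a constant $M_{\epsilon}$ depending on $\|C_{\epsilon}\|_{L^{\infty}(\Omega)}$, $|\Omega|$ and the measure of the effective $y$-support of $K_{\epsilon}(x,\cdot)$. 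Choosing $t_{0}$ with $M_{\epsilon}t_{0}<1$ produces, by Banach's fixed-point theorem, a unique $u^{\epsilon}\in X_{t_{0}}$ solving \eqref{pro2} on $[0,t_{0}]$. An entirely analogous estimate shows $T_{u_{0}}(X_{t_{0}})\subset X_{t_{0}}$.

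To promote this local solution to a global one, I would iterate: take $u^{\epsilon}(\cdot,t_{0})\in L^{1}(\Omega)$ as a new initial datum on $[t_{0},2t_{0}]$, and repeat. The crucial point is that the contraction time $t_{0}$ depends only on $\epsilon$ and on $\|C_{\epsilon}\|_{L^{\infty}(\Omega)}$, not on the size of the current initial datum, so iterating reaches any finite $T$ in finitely many steps. Continuity at the gluing times follows directly from the defining integral form of the equation.

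The main obstacle is largely a matter of bookkeeping rather than a genuinely analytic difficulty: one must carefully track the effective support of $K_{\epsilon}(x,\cdot)$, which is only assumed bounded on $\Omega\times F'$. To obtain a finite Lipschitz constant $M_{\epsilon}$ in the contraction estimate one must exploit both the $L^{\infty}$ bound on $F'$ and the compact support of $J$, inherited by $K_{\epsilon}(x,\cdot)$ through the change of variables $y\mapsto\delta_{\epsilon^{-1}}y^{-1}x$. Once this is secured, the argument is parallel to the Euclidean proofs of \cite{CCR,CER}, the only difference being that convolutions are replaced by integrals against the non-translation-invariant kernel $K_{\epsilon}$ and that dilations and Haar measure are those of the Carnot group $G$.
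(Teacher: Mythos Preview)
Your proposal is correct and follows essentially the same Banach fixed-point argument as the paper: the paper works in $X_{t_0}=C([0,t_0];L^1(\Omega))$, defines the same Duhamel map (extended by $g$ off $\Omega$), proves the Lipschitz estimate $|||\mathfrak{T}(w)-\mathfrak{T}(v)|||\le Ct_0|||w-v|||+\|w_0-v_0\|_{L^1(\Omega)}$ with $C=2\|C_\epsilon\|_{L^\infty(\Omega)}|\Omega|$, restricts to the affine subspace with fixed initial datum $u_0$ to obtain a contraction, and iterates. The only cosmetic difference is that the paper first proves the estimate allowing distinct initial data before specializing, whereas you build $u_0$ into $T_{u_0}$ from the outset.
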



\begin{proof}
We will use the Banach's Fixed Point Theorem. For $t_0 > 0$ let us consider the Banach space
\begin{align*}
X_{t_0}:=\left\{w\in C([0,t_0]; L^1 (\Omega))\right\},%
\end{align*}
with the norm
\begin{align*}
|||w|||:=\max_{0\leq t\leq t_0}\|w(\cdot,t)\|_{L^1(\Omega)}.
\end{align*}
Our aim is to obtain the solution of \eqref{pro2} as a fixed point of the operator $\mathfrak{T} : X_{ t_ 0} \rightarrow X_{ t_ 0}$ defined by
\begin{equation*}
\mathfrak{T} (w)(x,t):=
\begin{cases}
w_0(x)+\displaystyle\int_{0}^{t}\int_GK_\epsilon(x,y)\left(w(y,r)-w(x,r)\right)\,dydr \qquad & \text{if}\,\, x \in \Omega,\\
g(x,t) & \text{if}\,\, x \notin \Omega,
\end{cases}
\end{equation*} 
where $w_0(x)=w(x,0)$.

Let $w,\, v \in X_{t_ 0}$. Then there exists a constant $C$ depending on $K_\epsilon$ and $\Omega$ such that
\begin{align}\label{3.1}
|||\mathfrak{T}(w)-\mathfrak{T}(v)|||\leq Ct_0|||w-v|||+\|w_0-v_0\|_{L^1(\Omega)}.
\end{align} Indeed, since if $x\notin\Omega$ then $(w-v)(x,t)=0$, it follows that
\begin{align*}
&\int_{\Omega}\left|\mathfrak{T}(w)-\mathfrak{T}(v)\right|(x,t)dx\leq\int_{\Omega}|w_0-v_0|(x)dx\\
&\qquad \quad +\int_{\Omega}\left|\int_{0}^{t}\int_G K_\epsilon(x,y)\left((w-v)(y,r)-(w-v)(x,r)\right)\,dydr\right|dx\\
&\qquad \leq\|w_0-v_0\|_{L^1(\Omega)}+t\|C_\epsilon(x)\|_{L^\infty(\Omega)} 2 |\Omega| \; |||(w-v)|||.
\end{align*}
Taking the maximum in $t$  \eqref{3.1} follows.

Now, taking $v_0\equiv v\equiv 0$ in \eqref{3.1} we get that $\mathfrak{T}(w) \in C([0,t_0]; L^1 (\Omega))$
and this says that $\mathfrak{T}$ maps $X_{t_0}$ into $X_{t_0}$.  

Finally, we will consider $X_{t_0,u_0}=\{u\in X_{t_0}:\,u(x,0)=u_0(x)\}$. $\mathfrak{T}$ maps $X_{t_0,u_0}$ into $X_{t_0,u_0}$ and taking $t_ 0$ such that $2\|C_\epsilon(x)\|_{L^\infty(\Omega)}|\Omega|t_ 0 < 1$,
  we can apply the Banach's fixed point theorem in the interval $[0,t_0]$ because  $ \mathfrak{T}$ is a strict contraction in $X_{ t_ 0,u_0}$. From this we get the existence and uniqueness of the solution in $[0,t_0]$. To extend the solution to $[0,\infty)$ we may take as initial data $u(x, t_ 0 ) \in L^1 (\Omega)$ and obtain a solution up to $[0, 2t_ 0 ]$. Iterating this procedure we get a solution defined in $[0,\infty)$. 
\end{proof}

In order to prove a comparison principle of the problem given in (\ref{pro2}) we need to introduce the definition of sub and super solutions.
\begin{defin}
A function $u \in C([0, T ]; L^1 (\Omega))$ is a supersolution of \eqref{pro2} if
\begin{equation}\label{pro2sub}
\begin{cases}
u_t(x,t)\geq \int_G K_\epsilon(x,y)\left(u(y,t)-u(x,t)\right)\,dy, \qquad&\text{for}\,\, x \in \Omega \,\, \text{and}\,\, t>0,\\
u_t(x,t)\geq g(x,t), &\text{for}\,\, x \notin \Omega \,\, \text{and}\,\, t>0,\\
u(x,0)\geq u_0(x), &\text{for}\,\, x \in \Omega.
\end{cases}
\end{equation}
As usual, subsolutions are defined analogously by reversing the inequalities.
\end{defin}
\begin{lem}\label{com}
Let $u_0 \in C(\overline\Omega)$, $u_0 \geq 0$, and let $u \in C(\overline\Omega\times[0, T ])$ be a supersolution of
\eqref{pro2} with $g \geq 0$. Then, $u \geq 0$.
\end{lem}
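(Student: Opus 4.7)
The plan is to argue by contradiction, adapting the standard strong-minimum-principle technique used for analogous nonlocal Dirichlet problems in the Euclidean and Heisenberg settings (cf.\ the cited works \cite{CCR}, \cite{CER}, \cite{Vi}). Suppose on the contrary that $\min_{\overline\Omega\times[0,T]} u < 0$. To avoid the issue that a time-minimum in the interior forces only $u_t=0$ (and hence a degenerate equality in the supersolution inequality), I will regularize by introducing the penalized function
\begin{equation*}
v(x,t) := u(x,t) + \delta t,
\end{equation*}
for $\delta>0$ chosen small enough that $\min v$ is still strictly negative. Since $u$ is continuous on the compact set $\overline\Omega\times[0,T]$, so is $v$, and the minimum of $v$ is attained at some $(x_0,t_0)\in\overline\Omega\times[0,T]$.

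Next I would rule out that $(x_0,t_0)$ lies on the parabolic boundary. Indeed, since $u_0\ge 0$ by hypothesis, $v(x,0)=u(x,0)\ge u_0(x)\ge 0$; and since $g\ge 0$ and $\Omega$ is open (so that $\partial\Omega\subset\Omega^{c}$), the inequality $u(x,t)\ge g(x,t)\ge 0$ for $x\notin\Omega$ passes by continuity to $\partial\Omega$, yielding $v(x,t)\ge \delta t\ge 0$ there. Hence, as $\min v<0$, we must have $x_0\in\Omega$ and $t_0\in(0,T]$.

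Then I would exploit the minimum property pointwise. At $(x_0,t_0)$ the time derivative satisfies $v_t(x_0,t_0)\le 0$ (equal to zero if $t_0<T$, nonpositive if $t_0=T$). Spatially, $v(y,t_0)-v(x_0,t_0)\ge 0$ for every $y\in G$, and the same holds with $u$ in place of $v$ because the additive $\delta t$ term cancels. Since $K_\epsilon(x_0,\cdot)\ge 0$, the supersolution inequality of \eqref{pro2sub} then gives
\begin{equation*}
v_t(x_0,t_0) = u_t(x_0,t_0) + \delta \ge \int_G K_\epsilon(x_0,y)\bigl(u(y,t_0)-u(x_0,t_0)\bigr)\,dy + \delta \ge \delta > 0,
\end{equation*}
which contradicts $v_t(x_0,t_0)\le 0$. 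Therefore $v\ge 0$ on $\overline\Omega\times[0,T]$, and letting $\delta\to 0^{+}$ yields $u\ge 0$.

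The main obstacle I anticipate is the mild regularity issue: the supersolution is only assumed continuous, so the pointwise evaluation of $u_t(x_0,t_0)$ and the comparison $v_t(x_0,t_0)\le 0$ must be justified. In the framework of problem \eqref{pro2}, however, the right-hand side of the evolution equation is automatically continuous in $t$ whenever $u\in C(\overline\Omega\times[0,T])$ and $K_\epsilon\in L^\infty$, so $u_t$ exists and is continuous, and the classical interior time-minimum criterion applies; this is exactly the level of regularity used in \cite{CCR} and \cite{CER}, and the argument transfers unchanged.
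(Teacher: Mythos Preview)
Your argument is correct and follows essentially the same route as the paper: both proofs penalize with $v=u+\delta t$ (the paper writes $\gamma$ instead of $\delta$), locate a negative interior minimum of $v$, and derive the contradiction $v_t(x_0,t_0)\le 0$ versus $v_t(x_0,t_0)\ge \delta>0$ from the supersolution inequality and the nonnegativity of $K_\epsilon$. You are slightly more explicit than the paper in ruling out the parabolic boundary and in commenting on the regularity needed for $u_t$, but the core mechanism is identical.
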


\begin{proof}
Assume to the contrary that $u(x, t)$ is negative in some point. Let $v(x, t) = u(x, t) + \gamma t$ with $\gamma>0$ small such that $v$ is still negative somewhere. Then, if $(x_0, t_ 0 )$ is a point where $v$ attains its negative minimum, there it holds that $t_ 0 > 0$ and
\begin{align*}
v_t(x_0, t_ 0 )&=u_t(x_0, t_0 )+\gamma>\int_G K_\epsilon(x_0,y)\left(u(y,t_0)-u(x_0,t_0)\right)dy\\
                    &=\int_G K_\epsilon(x_0,y)\left(v(y,t_0)-v(x_0,t_0)\right)dy\geq0.
\end{align*}
 This contradicts that $(x_0 , t_ 0 )$ is a minimum of $v$. Thus, $u \geq 0$.
\end{proof}
Let $f(x,t)$ a function in $G\times (0,\infty)$, we consider next problem 
\begin{align}\label{pro2.0}
\left\{ \begin{array}{rclcc}
u^{\epsilon}_{t}(x,t) & = & \int_G K_\epsilon(x,y)\left(u(y,t)-u(x,t)\right)dy+f(x,t) & \mbox{ for } & (x,t)\in\Omega\times[0,T],\\
u^{\epsilon}(x,t) & = & g(x,t) & \mbox{ for } & x\notin\Omega, t\in[0,T],\\
u^{\epsilon}(x,0) & = & u_{0}(x) & \mbox{ for } & x\in\Omega,
\end{array}
\right.
\end{align}
\begin{cor}\label{comp}
Let $K_\epsilon\in L^\infty (G)$. Let $u_0$ and $v_0$ in $L^1(\Omega)$ with $u_0 \geq v_ 0$ and let the functions 
$g, h \in L^\infty  ((0, T ); L^1 (G\setminus\Omega))$ with $g \geq h$. Let $u$ be a solution of \eqref{pro2.0} with $u(x, 0) = u _0(x)$ and Dirichlet datum $g$, and let $v$ be a solution of \eqref{pro2.0} with $v(x, 0) = v_0(x)$
and datum $h$. Then, $u \geq v$ a.e. $\Omega$.
\end{cor}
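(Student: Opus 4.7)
The plan is to exploit linearity of \eqref{pro2}. Since both the integral equation and the Dirichlet/initial data enter linearly, the difference $w := u-v$ is (by uniqueness in Theorem \ref{31}) the solution of \eqref{pro2} with initial datum $w_0 = u_0 - v_0 \ge 0$ and exterior datum $g-h \ge 0$. The corollary therefore reduces to showing that any $w \in C([0,T];L^1(\Omega))$ satisfying \eqref{pro2} with nonnegative initial and exterior data is itself nonnegative a.e.\ in $\Omega\times[0,T]$.

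To establish this I would run a Gronwall estimate on the $L^1$-mass of the negative part. Set $\phi(t) := \int_\Omega w^-(x,t)\,dx$ with $w^- = \max(-w,0)$, and $\Omega_-(t) := \{x\in\Omega : w(x,t)<0\}$; note $\phi(0)=0$. Differentiating under the integral and using the equation for $w$,
\begin{align*}
\phi'(t) &= -\int_{\Omega_-(t)} w_t(x,t)\,dx = -\int_{\Omega_-(t)}\!\int_G K_\epsilon(x,y)\bigl(w(y,t)-w(x,t)\bigr)\,dy\,dx \\
&= \int_{\Omega_-(t)} w(x,t)\Bigl(\int_G K_\epsilon(x,y)\,dy\Bigr)dx - \int_{\Omega_-(t)}\!\int_G K_\epsilon(x,y)\,w(y,t)\,dy\,dx.
\end{align*}
The first summand is nonpositive, because $w(x,t)<0$ on $\Omega_-(t)$ and $K_\epsilon \ge 0$. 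For the second, I split $G = \Omega_-(t)\cup(\Omega\setminus\Omega_-(t))\cup(G\setminus\Omega)$: on $\Omega\setminus\Omega_-(t)$ one has $w(y,t)\ge 0$ by definition of $\Omega_-(t)$, while on $G\setminus\Omega$ one has $w(y,t) = g(y,t)-h(y,t)\ge 0$ by hypothesis, so both slices contribute a nonpositive quantity to $-\int K_\epsilon w$. Only the diagonal block $\Omega_-(t)\times\Omega_-(t)$ is dangerous, and using $-w(y,t)=w^-(y,t)$ together with $K_\epsilon\in L^\infty(G)$ one obtains
\begin{align*}
\phi'(t) \;\le\; \|K_\epsilon\|_{L^\infty}\, |\Omega|\, \phi(t).
\end{align*}
Since $\phi(0)=0$, Gronwall's lemma yields $\phi \equiv 0$ on $[0,T]$, i.e.\ $u\ge v$ a.e.\ in $\Omega$ for every $t$.

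The main obstacle is rigorously justifying the differentiation of $\phi$, since $w$ is only in $C([0,T];L^1)$ and $s \mapsto s^-$ is not differentiable at $0$. I would handle this by approximating $s^-$ by a smooth convex regularization $\Psi_\delta$ (for instance $\Psi_\delta(s) = \sqrt{(s^-)^2 + \delta^2} - \delta$), writing the corresponding integrated identity using the mild form of \eqref{pro2}, and passing to the limit $\delta\downarrow 0$ by dominated convergence; the boundedness $K_\epsilon \in L^\infty(G)$ makes the dominating function integrable. A conceptually cleaner alternative would be to first approximate $(u_0, v_0, g, h)$ by continuous data respecting the same inequalities, obtain continuous solutions by the fixed-point argument of Theorem \ref{31}, apply Lemma \ref{com} to their difference (which is a supersolution of the zero-data problem with nonnegative initial and exterior data), and then pass to the $L^1$-limit using the contraction estimate \eqref{3.1} from the proof of Theorem \ref{31}.
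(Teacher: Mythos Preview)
Your proposal is correct. The primary argument you give---a Gronwall estimate on $\phi(t)=\int_\Omega w^-(x,t)\,dx$---is a genuinely different route from the paper's. The paper simply sets $w=u-v$, invokes continuity of solutions with respect to the data (from the contraction estimate in the proof of Theorem~\ref{31}) together with $K_\epsilon\in L^\infty(G)$ to reduce to continuous $u,v$, and then applies Lemma~\ref{com} directly to $w$. In other words, the paper's proof is exactly the ``conceptually cleaner alternative'' you describe in your final paragraph. Your Gronwall approach has the advantage of being self-contained (it does not rely on Lemma~\ref{com}) and of working directly at the $L^1$ level, at the cost of the regularization step you flag; the paper's route sidesteps that technicality by passing through continuous approximants but leans on the earlier lemma.
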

\begin{proof}
Let $w = u - v$. Then, $w$ is a supersolution of \eqref{pro2} with initial datum $u_ 0 - v_ 0 \geq 0$ and datum $g - h \geq 0$. Using the continuity of the solutions with respect to the data and the fact that $K_\epsilon\in L^\infty (G)$, we may assume that $u, v \in C(\Omega\times [0, T ])$. By Lemma \eqref{com} we obtain that $w = u - v \geq 0$. So the corollary is proved.
\end{proof}
\begin{cor}\label{comp2}
Let $u,v \in C(\Omega\times [0, T ])$. If $u$ is a supersolution of \eqref{pro2.0} and $v$ is a subsolution of \eqref{pro2.0}, then $u \geq v$.
\end{cor}
\begin{proof}
It follows from the proof of the previous corollary.
\end{proof}

\section{Proof of the Main Theorems}\label{sec:Proofs}

\par The following Lemmas are the key for the proof of Theorems \ref{thm:MainMR} and \ref{thm:MainSYL}. To illustrate the technique we first prove a result which refers to the evolution problem stated in section \eqref{subsec:EvolEqn}.
\begin{lem}\label{lem:aproxSublap} Let $\Omega\subset G$ be a bounded domain, and 
let $v\in C^{2+\alpha}(G)$ for some $0<\alpha <1$. Then there exist constants $c$ and $c'$ that depends only of $v$, $J$ and $\Omega$ such that for all $\epsilon>0$  
\begin{align*} \left|\left|\mathcal{E}_{\epsilon}(v)-\frac{c'}{2}\mathcal{J}(v)\right|\right|_{L^{\infty}(\Omega)} \le & c \epsilon^{\alpha},
\end{align*}
where $\mathcal{J}(v)(x)=\sum\limits_{i=1}^{n_{1}}X_{i}^{2}v(x)$ denotes minus the subLaplacian.
\end{lem}
\begin{proof}
\par Let us begin by writing the formula that defines $\mathcal{E}_{\epsilon}$ by means of the global chart given by the fixed basis of the stratified Lie algebra $\mathfrak{g}$: for $x\in\Omega$, since 
\begin{align*}
\delta_{\epsilon}(y^{-1}) = & \exp(-\epsilon t_{1}X_{1} - \dots - \epsilon t_{n_{1}}X_{n_{1}}\\
& \qquad -\epsilon^{2}t_{n_{1}+1}X_{n_{1}+1}-\dots-\epsilon^{2}t_{n_{1}+n_{2}}X_{n_{1}+n_{2}}-\dots \\
& \qquad -\epsilon^{m}t_{n}X_{n}),
\end{align*}
for the coordinates $(t_{1},\dots,t_{n})\in\mathbb{R}^{n}$ adapted to the basis, we can write
\begin{align*}
\mathcal{E}_{\epsilon}v(x) = &  \frac{1}{\epsilon^{2}}\int\limits_{G}\left[v(x(\delta_{\epsilon}(y^{-1})))-v(x)\right]J(y)dy \\
 = & \frac{1}{\epsilon^{2}} \int\limits_{\mathbb{R}^{n}} \left( v\left( x\exp\left( -\epsilon t_{1}X_{1} -\dots-\epsilon^{m}t_{n}X_{n}\right) \right) -v(x) \right)\\
& \qquad \times J(\exp(t_{1}X_{1}+\dots+t_{n}X_{n})) dt_{1}\dots dt_{n}.
\end{align*}
\par Thus, from the Taylor expansion \eqref{simple.Taylor.expansion} discussed in section \ref{sec:Prelim},

\begin{align*}
 v\left( x\exp\left( -\epsilon t_{1}X_{1} -\dots-\epsilon^{m}t_{n}X_{n}\right) \right) -v(x)  = & -\epsilon \sum\limits_{i=1}^{n_{1}+n_{2}} t_{i}X_{i}v(x) \\ 
& + \frac{\epsilon^{2}}{2} \sum\limits_{i,j=1}^{n_{1}} t_{i}t_{j}X_{i}X_{j}v(x)+o(\left|\delta_\epsilon\left(t_{1}X_{1} +\dots+t_{n}X_{n}\right)\right|^{2}).
\end{align*}
Therefore,
\begin{align*}
\mathcal{E}_{\epsilon}v(x) = & \frac{1}{\epsilon^{2}} I + \frac{1}{\epsilon^{2}} II +\frac{1}{\epsilon^{2}} III,
\end{align*}
used \eqref{vani2}, we have
\begin{align*}
\left|\frac{1}{\epsilon^{2}} III\right|&=\left| \int_{\mathbb{R}^{n}}  \frac{o(\left|\delta_\epsilon\left(t_{1}X_{1} +\dots+t_{n}X_{n}\right)\right|^{2})}{\epsilon^2}J\left(\exp(t_{1}X_{1} +\dots+t_{n}X_{n})\right)dt_1\cdots dt_n\right|\\
&\leq c\int_{\mathbb{R}^{n}} \epsilon^\alpha\left|\exp(t_{1}X_{1} +\dots+t_{n}X_{n})\right|^{2+\alpha}\left|J\left(\exp(t_{1}X_{1} +\dots+t_{n}X_{n})\right)\right|dt_1\cdots dt_n\\
&\leq c\epsilon^\alpha.
\end{align*}

Where from properties \eqref{J.x}, \eqref{J.x^2} and \eqref{J.deltaij} we can compute
\begin{align*}
\frac{I}{\epsilon^2} = &  \frac{-1}{\epsilon^2}  \sum\limits_{i=1}^{n_{1}+n_{2}} \epsilon^{\lambda_j} X_{i}v(x)\int\limits_{\mathbb{R}^{n}} J(\exp(t_{1}X_{1}+\dots +t_{n}X_{n}))t_{i}dt_{1}\dots dt_{n} = 0, \\
\frac{II}{\epsilon^2} = & \frac{1}{2} \sum\limits_{i,j=1}^{n_{1}} X_{i}X_{j}v(x) \int\limits_{\mathbb{R}^{n}} J(\exp(t_{1}X_{1}+\dots +t_{n}X_{n})) t_{i}t_{j} dt_{1}\dots dt_{n} \\
 = & c' \frac{1}{2} \sum\limits_{i=1}^{n_{1}} X_{i}^{2}v(x).
\end{align*}
\par Finally, 
\begin{align*}
\left|\left|\mathcal{E}_{\epsilon}v(x)-\frac{c'}{2}\mathcal{J}v(x)\right|\right|_{L^{\infty}(\Omega)} \le & c\epsilon^\alpha.
\end{align*}
\end{proof}

\begin{lem}\label{lem:cuentas} Let $\Omega\subset G$ be a bounded domain, and let $v\in C^{2+\alpha}(G)$ for some $0<\alpha <1$. Then, there exists a constant $c$ that depends only of $v$, the matrix $A$, the vector $b$,  $J$ and $\Omega$ such that for all $\epsilon>0$  
\begin{align*} \left|\left|\mathcal{K}_{\epsilon}(v)-\mathcal{K}(v)\right|\right|_{L^{\infty}(\Omega)} \le & c \epsilon^{\alpha},
\end{align*}
where $\mathcal{K}$ is the operator defined in \eqref{operator}. 
\end{lem}

\begin{proof}
\par By changing variables via $z=\exp(L^{-1}(x)\exp^{-1}(\delta_{\epsilon^{-1}}(y^{-1}x)))$, since thus we have that $y=x\exp(\delta_{\epsilon}L(x)\exp^{-1}(z^{-1}))$ and $dy=\epsilon^{Q}\det(L(x))dz$ 
for $\epsilon>0$, the reescaled kernel operator becomes
\begin{align*}
\mathcal{K}_{\epsilon}v(x)=\frac{c(x)\det(L(x))}{\epsilon^{2}}\int\limits_{G}&a\left(\left(\exp\frac{M}{2}\delta_{\epsilon}W(x)(L^{t}(x))^{-1}\exp^{-1}z\right)^{-1}\right)J(z)\\
& (v(x\exp(\delta_{\epsilon}L(x)\exp^{-1}(z^{-1})))-v(x))dz,
\end{align*}
and by definition of the function $a$ it finally assumes the form
\begin{align*}
\mathcal{K}_{\epsilon}v(x)=\frac{2}{\epsilon^{2}C(J)M} \int\limits_{G}&\left(-\frac{M}{2}\sum\limits_{j=1}^{n}\epsilon^{\lambda_{j}}\tilde{b}_{j}(x)\sum\limits_{h=1}^{n}l_{hj}^{\ast}(x)\phi_{h}(\exp^{-1}z)+M\right) J(z)\\
& (v(x\exp(\delta_{\epsilon}L(x)\exp^{-1}(z^{-1})))-v(x))dz.
\end{align*}
\par Now let us write the formula in terms of the global chart as we did before (recall the proof of Lemma \ref{lem:aproxSublap}):
\begin{align*}
\mathcal{K}_{\epsilon}v(x)=\frac{2}{\epsilon^{2}C(J)M} \int\limits_{\mathbb{R}^{n}}&\left(-\frac{M}{2}\sum\limits_{j=1}^{n}\epsilon^{\lambda_{j}}\tilde{b}_{j}(x)\sum\limits_{h=1}^{n}l_{hj}^{\ast}(x)t_{h}+M\right) J\left(\exp\sum\limits_{r=1}^{n}t_{r}X_{r}\right)\\
& \left(v\left(x\exp\left(-\sum\limits_{i=1}^{n}\epsilon^{\lambda_{i}}\sum\limits_{k=1}^{n}l_{ik}(x)t_{k}X_{i}\right)\right)-v(x)\right)dt_{1}\dots dt_{n},
\end{align*}
where $t_h=\phi_{h}(\exp^{-1}z)$.

\par For the last factor we apply the Taylor expansion of homogeneous degree $2$ (recall formula \eqref{simple.Taylor.expansion})
\begin{align*}
& v\left(x\exp\left(-\sum\limits_{i=1}^{n}\epsilon^{\lambda_{i}}\sum\limits_{k=1}^{n}l_{ik}(x)t_{k}X_{i}\right)\right)-v(x) \\ 
& = -\sum\limits_{i=1}^{n_{1}+n_{2}}\epsilon^{\lambda_{i}}\sum\limits_{k=1}^{n} l_{ik}(x)t_{k} X_{i}v(x)  + \frac{\epsilon^{2}}{2} \sum\limits_{i,j=1}^{n_{1}}\left(\sum\limits_{k=1}^{n} l_{ik}(x)t_{k}\right)\left(\sum\limits_{h=1}^{n}l_{jh}(x)t_{h}\right)X_{i}X_{j}v(x) \\
& + o\left(\left|\sum\limits_{i=1}^{n}\epsilon^{\lambda_{i}}\sum\limits_{k=1}^{n}l_{ik}(x)t_{k}X_{i}\right|^2\right).
\end{align*}
Then we can split as follows
\begin{align*}
&\mathcal{K}_{\epsilon}v(x)=\mathcal{K}_{\epsilon,1}v(x)+\mathcal{K}_{\epsilon,2}v(x)+E_\epsilon. 
\end{align*}
By \eqref{vani2} 
\begin{align*}
 \left|E_\epsilon\right|&\leq \frac{1}{C(J)} \int\limits_{\mathbb{R}^{n}}\left|\left(-\frac{M}{2}\sum\limits_{j=1}^{n}\epsilon^{\lambda_{j}}\tilde{b}_{j}(x)\sum\limits_{h=1}^{n}l_{hj}^{\ast}(x)t_{h}+M\right) J\left(\exp\sum\limits_{r=1}^{n}t_{r}X_{r}\right)\right| \\
                        &\qquad\times\left|\frac{o\left(\left|\delta_\epsilon\left(\sum\limits_{i=1}^{n}\sum\limits_{k=1}^{n}l_{ik}(x)t_{k}X_{i}\right)\right|^2\right)}{\epsilon^2}\right|\, dt_1\cdots dt_n \\
                       &\leq  \frac{1}{C(J)} \int\limits_{\mathbb{R}^{n}}\left|\left(-\frac{M}{2}\sum\limits_{j=1}^{n}\epsilon^{\lambda_{j}}\tilde{b}_{j}(x)\sum\limits_{h=1}^{n}l_{hj}^{\ast}(x)t_{h}+M\right) J\left(\exp\sum\limits_{r=1}^{n}t_{r}X_{r}\right)\right| \\
                        &\qquad\times c \epsilon^\alpha \left|\exp\left( \sum\limits_{i=1}^{n}\sum\limits_{k=1}^{n}l_{ik}(x)t_{k}X_{i}\right)\right|^{2+\alpha} \, dt_1\cdots dt_n \\
												&= c \epsilon^\alpha.
\end{align*}

Now, for  $\mathcal{K}_{\epsilon,1}v(x)$ and $\mathcal{K}_{\epsilon,2}v(x)$,  by extensive use of properties \eqref{J.x}, \eqref{J.x^2} and \eqref{J.deltaij}, we have
\begin{align*}
\mathcal{K}_{\epsilon,1}v(x) = & \frac{1}{\epsilon^{2}C(J)}\sum\limits_{i=1}^{n_{1}+n_{2}}\epsilon^{\lambda_{i}}\sum\limits_{k=1}^{n}l_{ik}(x)\sum\limits_{j=1}^{n}\epsilon^{\lambda_{j}}\tilde{b}_{j}(x)\sum\limits_{h=1}^{n}l^{\ast}_{hj}(x) \\
& \left(\,\int\limits_{\mathbb{R}^{n}}J\left(\exp\sum\limits_{r=1}^{n}t_{r}X_{r}\right)t_kt_hdt_{1}\dots dt_{n}\right)X_{i}v(x)\\
= & \frac{1}{\epsilon^{2}}\sum\limits_{i=1}^{n_{1}+n_{2}}\epsilon^{\lambda_{i}}\sum\limits_{j=1}^{n}\epsilon^{\lambda_{j}}\tilde{b}_{j}(x)\sum\limits_{k=1}^{n}l_{ik}(x)l^{\ast}_{kj}(x)X_{i}v(x) \\
= & \frac{1}{\epsilon^{2}}\sum\limits_{i=1}^{n_{1}+n_{2}}\epsilon^{\lambda_{i}}\sum\limits_{j=1}^{n}\epsilon^{\lambda_{j}}\tilde{b}_{j}(x)\delta_{ij}X_{i}v(x)\\
= & \frac{1}{\epsilon^{2}}\sum\limits_{i=1}^{n_{1}+n_{2}}\epsilon^{2\lambda_{i}}\tilde{b}_{i}(x)X_{i}v(x) \\
= &\sum\limits_{i=1}^{n_{1}+n_{2}}b_{i}(x)X_{i}v(x);
\end{align*}
\begin{align*}
\mathcal{K}_{\epsilon,2}v(x) = & \frac{2}{\epsilon^{2}C(J)} \sum\limits_{i,j=1}^{n_{1}}\frac{\epsilon^{2}}{2}\left(\sum\limits_{k=1}^{n}l_{ik}(x)\sum\limits_{h=1}^{n}l_{jh}(x)\right) \\
& \left(\int\limits_{\mathbb{R}^{n}}J\left(\exp\sum\limits_{r=1}^{n}t_{r}X_{r}\right)t_kt_hdt_{1}\dots dt_{n}\right)X_{i}X_{j}v(x) \\
 = & \sum\limits_{i,j=1}^{n_{1}}a_{ij}(x)X_{i}X_{j}v(x).
\end{align*}
\par Thus the proof ends.
\end{proof}

\begin{lem} Let $\Omega\subset G$ be a bounded domain, and let $u\in C^{2+\alpha}(G)$ for some $0<\alpha <1$. Then, there exists a constant $c$ that depends only of $v$, the function $a$, $J$ and $\Omega$ such that for all $\epsilon>0$  
\begin{align*} ||\mathcal{L}_{\epsilon}(v)-\mathcal{L}(v)||_{L^{\infty}(\Omega)} \le c\epsilon^{\alpha},
\end{align*}
where $\mathcal{L}$ is the operator defined in \eqref{operator1}.
\end{lem}

\begin{proof}
\par Let us rewrite the operators as follows:
\begin{align*}
\mathcal{L}_{\epsilon}(v)(x)=&\frac{2C(J)}{\epsilon^{Q+2}} \int\limits_{G} J(\delta_{\epsilon^{-1}}y^{-1}x)a(y)[v(y)-v(x)]dy \\
& +\frac{2C(J)}{\epsilon^{Q+2}} \int\limits_{G} J(\delta_{\epsilon^{-1}}y^{-1}x)[a(y)-a(x)]v(x)dy.
\end{align*}

\par As usual, let us first change variables according to $z=\delta_{\epsilon^{-1}}y^{-1}x$, hence $y=\delta_{\epsilon}xz^{-1}$ and $dz=-\epsilon^{Q}dy$ and then write it in coordinates:
\begin{align*}
\mathcal{L}_{\epsilon}(v)(x) = &\frac{2C(J)}{\epsilon^{2}} \int\limits_{G} J(z)a(\delta_{\epsilon}xz^{-1})[v(\delta_{\epsilon}xz^{-1})-v(x)]dz \\
& +\frac{2C(J)}{\epsilon^{2}} \int\limits_{G} J(z)[a(\delta_{\epsilon}xz^{-1})-a(x)]v(x)dz \\
= & \frac{2C(J)}{\epsilon^{2}} \int\limits_{\mathbb{R}^{n}} J\left(\exp\sum\limits_{r=1}^{n}t_{r}X_{r}\right)a\left(x\exp\left(-\sum\limits_{k=1}^{n}\epsilon^{\lambda_{k}}t_{k}X_{k}\right)\right) \\
& \left[v\left(x\exp\left(-\sum\limits_{i=1}^{n}\epsilon^{\lambda_{i}}t_{i}X_{i}\right)\right)-v(x)\right]dt_{1}\dots dt_{n} \\
& +\frac{2C(J)}{\epsilon^{2}} \int\limits_{\mathbb{R}^{n}} J\left(\exp\sum\limits_{r=1}^{n}t_{r}X_{r}\right)\left[a\left(x\exp\left(-\sum\limits_{i=1}^{n}\epsilon^{\lambda_{i}}t_{i}X_{i}\right)\right)-a(x)\right] \\
 & \qquad \times v(x)dt_{1}\dots dt_{n}\\
 = & I + II,
\end{align*}
where $t_k=\phi_{k}(\exp^{-1}z)$.

The next step is to apply Taylor decomposition of homogeneous degree $2$ (recall formula \eqref{simple.Taylor.expansion}) to $v$ in $I$ and to $a$ in $II$:
\begin{align*}
 v\left(x\exp\left(-\sum\limits_{i=1}^{n}\epsilon^{\lambda_{i}}t_{i}X_{i}\right)\right)-v(x)  = & -\sum\limits_{i=1}^{n_{1}+n_{2}}\epsilon^{\lambda_{i}}t_{i} X_{i}v(x) \\ & + \frac{\epsilon^{2}}{2} \sum\limits_{i,j=1}^{n_{1}}t_{i}t_{j}X_{i}X_{j}v(x) + o\left(\left|\delta_\epsilon\left(\sum\limits_{i=1}^{n}t_{i}X_{i}\right)\right|^2\right),
\end{align*}
hence, by \eqref{vani2}
\begin{align*}
I = & \frac{2C(J)}{\epsilon^{2}} \int\limits_{\mathbb{R}^{n}} J\left(\exp\sum\limits_{r=1}^{n}t_{r}X_{r}\right)a\left(x\exp\left(-\sum\limits_{k=1}^{n}\epsilon^{\lambda_{k}}t_{k}X_{k}\right)\right) \\
& \qquad \times \left(-\sum\limits_{i=1}^{n_{1}+n_{2}}\epsilon^{\lambda_{i}}t_{i} X_{i}v(x) \right)dt_{1}\dots dt_{n} \\
& + C(J)\int\limits_{\mathbb{R}^{n}} J\left(\exp\sum\limits_{r=1}^{n}t_{r}X_{r}\right)a\left(x\exp\left(-\sum\limits_{k=1}^{n}\epsilon^{\lambda_{k}}t_{k}X_{k}\right)\right) \\
& \qquad \times \left( \sum\limits_{i,j=1}^{n_{1}}t_{i}t_{j}X_{i}X_{j}v(x)\right)dt_{1}\dots dt_{n} \\
& + \epsilon^{\alpha} c\int\limits_{\mathbb{R}^{n}} J\left(\exp\sum\limits_{r=1}^{n}t_{r}X_{r}\right)a\left(x\exp\left(-\sum\limits_{k=1}^{n}\epsilon^{\lambda_{k}}t_{k}X_{k}\right)\right)dt_{1}\dots dt_{n} \\
 = & I_{1}+I_{2}+\epsilon^\alpha c,
\end{align*}
and by applying Taylor formula again to $a$, but this time of homogeneous degree $1$, and extensive use of formulas \eqref{J.x}, \eqref{J.x^2} and \eqref{J.deltaij} it follows that
\begin{align*}
I_{1} = & \frac{2C(J)}{\epsilon^{2}} \int\limits_{\mathbb{R}^{n}} J\left(\exp\sum\limits_{r=1}^{n}t_{r}X_{r}\right)\left(a(x)-\sum\limits_{k=1}^{n_{1}}\epsilon t_{k}X_{k}(a)(x)+o\left(\left|\delta_\epsilon\left(\sum_{r=1}^{n}t_{r}X_{r}\right)\right|\right)\right) \\
& \times \left(-\sum\limits_{i=1}^{n_{1}+n_{2}}\epsilon^{\lambda_{i}}t_{i} X_{i}(v)(x) \right)dt_{1}\dots dt_{n} \\
= & \frac{2C(J)}{\epsilon^{2}} \sum\limits_{i=1}^{n_{1}+n_{2}}  \sum\limits_{k=1}^{n_{1}}\epsilon^{\lambda_{i}+1} X_{k}(a)(x) X_{i}(v)(x)\int\limits_{\mathbb{R}^{n}} J\left(\exp\sum\limits_{r=1}^{n}t_{r}X_{r}\right) t_{k}t_{i}  dt_{1}\dots dt_{n} \\
&+\frac{2C(J)}{\epsilon^{2}} \int\limits_{\mathbb{R}^{n}} J\left(\exp\sum\limits_{r=1}^{n}t_{r}X_{r}\right)o\left(\left|\delta_\epsilon\left(\sum_{r=1}^{n}t_{r}X_{r}\right)\right|\right) \left(-\sum\limits_{i=1}^{n_{1}+n_{2}}\epsilon^{\lambda_{i}}t_{i} X_{i}(v)(x) \right)dt_{1}\dots dt_{n} \\
= & 2\sum\limits_{i=1}^{n_{1}} X_{i}(a)(x) X_{i}(v)(x), \\
&+\frac{C(J)}{\epsilon} \int\limits_{\mathbb{R}^{n}} J\left(\exp\sum\limits_{r=1}^{n}t_{r}X_{r}\right)o\left(\left|\delta_\epsilon\left(\sum_{r=1}^{n}t_{r}X_{r}\right)\right|\right) \left(-\sum\limits_{i=1}^{n_{1}+n_{2}}\epsilon^{\lambda_{i}-1}t_{i} X_{i}(v)(x) \right)dt_{1}\dots dt_{n} \\
\end{align*}

Now, by \eqref{vani} and Theorem \ref{Thm:STI}, we get
\begin{align*}
&\left|\int\limits_{\mathbb{R}^{n}} J\left(\exp\sum\limits_{r=1}^{n}t_{r}X_{r}\right)\frac{o\left(\left|\delta_\epsilon\left(\sum_{r=1}^{n}t_{r}X_{r}\right)\right|\right)}{\epsilon} \left(-\sum\limits_{i=1}^{n_{1}+n_{2}}\epsilon^{\lambda_{i}-1}t_{i} X_{i}(v)(x) \right)dt_{1}\dots dt_{n}\right|\\
&\quad\leq \epsilon c\int\limits_{\mathbb{R}^{n}} \left|J\left(\exp\sum\limits_{r=1}^{n}t_{r}X_{r}\right)\left(-\sum\limits_{i=1}^{n_{1}+n_{2}}\epsilon^{\lambda_{i}-1}t_{i} X_{i}(v)(x) \right)\right|dt_{1}\dots dt_{n}\\
&\quad\leq \epsilon c.
\end{align*}

For $I_2$ we have
\begin{align*}
I_{2} = & C(J)\int\limits_{\mathbb{R}^{n}} J\left(\exp\sum\limits_{r=1}^{n}t_{r}X_{r}\right)\left(a(x)-\sum\limits_{k=1}^{n_{1}}\epsilon t_{k}X_{k}(a)(x)+o\left(\left|\delta_\epsilon\left(\sum_{r=1}^{n}t_{r}X_{r}\right)\right|\right)\right) \\
& \times \left( \sum\limits_{i,j=1}^{n_{1}}t_{i}t_{j}X_{i}X_{j}v(x)\right)dt_{1}\dots dt_{n} \\
= & C(J)\sum\limits_{j,i=1}^{n_{1}}a(x)X_{j}X_{i}(v)(x) \int\limits_{\mathbb{R}^{n}}  J\left(\exp\sum\limits_{rr=1}^{n}t_{r}X_{r}\right)t_{j}t_j dt_{1}\dots dt_{n}\\
  &\quad + C(J)\int\limits_{\mathbb{R}^{n}} J\left(\exp\sum\limits_{r=1}^{n}t_{r}X_{r}\right)o\left(\left|\delta_\epsilon\left(\sum_{r=1}^{n}t_{r}X_{r}\right)\right|\right)\left( \sum\limits_{i,j=1}^{n_{1}}t_{i}t_{j}X_{i}X_{j}v(x)\right)dt_{1}\dots dt_{n}  \\
= & \sum\limits_{i=1}^{n_{1}}a(x)X_{i}X_{i}(v)(x)\\
&\quad+ C(J)\int\limits_{\mathbb{R}^{n}} J\left(\exp\sum\limits_{r=1}^{n}t_{r}X_{r}\right)o\left(\left|\delta_\epsilon\left(\sum_{r=1}^{n}t_{r}X_{r}\right)\right|\right)\left( \sum\limits_{i,j=1}^{n_{1}}t_{i}t_{j}X_{i}X_{j}v(x)\right)dt_{1}\dots dt_{n} .
\end{align*}
Now, by \eqref{vani} and Theorem \ref{Thm:STI}, we get
\begin{align*}
\left|\int\limits_{\mathbb{R}^{n}} J\left(\exp\sum\limits_{r=1}^{n}t_{r}X_{r}\right)o\left(\left|\delta_\epsilon\left(\sum_{r=1}^{n}t_{r}X_{r}\right)\right|\right)\left( \sum\limits_{i,j=1}^{n_{1}}t_{i}t_{j}X_{i}X_{j}v(x)\right)dt_{1}\dots dt_{n} \right|\leq \epsilon C(J,v,a).
\end{align*}

Finally, by applying Taylor decomposition of homogeneous degree $2$ again to $a$,  
\begin{align*}
II = & \frac{2C(J)}{\epsilon^{2}} \int\limits_{\mathbb{R}^{n}} J\left(\exp\sum\limits_{r=1}^{n}t_{r}X_{r}\right) \\
 & \qquad \times \left[-\sum\limits_{i=1}^{n_{1}+n_{2}}\epsilon^{\lambda_{i}}t_{i} X_{i}(a)(x)  + \frac{\epsilon^{2}}{2} \sum\limits_{i,j=1}^{n_{1}}t_{i}t_{j}X_{i}X_{j}(a)(x) + o\left(\left|\delta_\epsilon\left(\sum\limits_{r=1}^{n}t_{r}X_{r}\right)\right|^2\right)\right] \\
& \qquad \times v(x)dt_{1}\dots dt_{n} \\
= & C(J) \sum\limits_{j,i=1}^{n_{1}} X_{j}X_{i}(a)(x)v(x) \int\limits_{\mathbb{R}^{n}} J\left(\exp\sum\limits_{r=1}^{n}t_{r}X_{r}\right)t_{i}t_j dt_{1}\dots dt_{n}\\
& \qquad +\frac{2C(J)}{\epsilon^{2}} \int\limits_{\mathbb{R}^{n}} J\left(\exp\sum\limits_{r=1}^{n}t_{r}X_{r}\right)o\left(\left|\delta_\epsilon\left(\sum\limits_{r=1}^{n}t_{r}X_{r}\right)\right|^2\right) v(x)dt_{1}\dots dt_{n} \\
= & \sum\limits_{i=1}^{n_{1}} X_{i}X_{i}(a)(x)v(x)\\
&+\frac{2C(J)}{\epsilon^{2}} v(x)\int\limits_{\mathbb{R}^{n}} J\left(\exp\sum\limits_{r=1}^{n}t_{r}X_{r}\right)o\left(\left|\delta_\epsilon\left(\sum\limits_{r=1}^{n}t_{r}X_{r}\right)\right|^2\right) dt_{1}\dots dt_{n}, \\
\end{align*}
by \eqref{vani2}
\begin{align*}
&\left|\frac{2C(J)}{\epsilon^{2}} v(x)\int\limits_{\mathbb{R}^{n}} J\left(\exp\sum\limits_{r=1}^{n}t_{r}X_{r}\right)o\left(\left|\delta_\epsilon\left(\sum\limits_{r=1}^{n}t_{r}X_{r}\right)\right|^2\right) dt_{1}\dots dt_{n}\right| \\
&\qquad\leq \|v\|_{L^\infty(\Omega)} c\epsilon.
\end{align*}
\end{proof}

\par Next we turn to the proof of Theorem \ref{thm:MainMR}. The proof of Theorem \ref{thm:MainSYL} follows the same lines.
\begin{proof}[Proof of Theorem \ref{thm:MainMR}]
\par Let $v(\cdot,t)\in C^{2+\alpha}(\Omega)$ be a solution of problem \eqref{0.2}, and define an extension $\tilde{v}$ of $v$ to the space $C^{2+\alpha,1+\alpha}(G\times [0,T])$ such that \begin{equation}\label{5.1}%
\begin{cases}
\tilde{v}_{t}(x,t)=\mathcal{K}(\tilde{v})(x,t),\qquad & x\in \Omega,\quad t>0,\\
\tilde{v}(x,t)=\tilde{g}(x,t), & x\notin \Omega,\quad t>0,\\
\tilde{v}(x,0)=u_{0}(x), & x\in\Omega,
\end{cases}
\end{equation}
where $\tilde{g}$ is a smooth function which satisfies $\tilde{g}(x,t)=g(x,t)$ if $x\in\partial\Omega$ and $\tilde{g}(x,t)=g(x,t)+O(\epsilon)$ if $x\approx\partial\Omega$, (in the sense $\lim_{\epsilon\to 0}O(\epsilon)=0$).
\par Let us define now the difference $w^{\epsilon}(x,t)=\tilde{v}(x,t)-u^{\epsilon}(x,t)$. Thus defined, $w^{\epsilon}$ satisfies
\begin{equation}\label{5.2}%
\begin{cases}
w^{\epsilon}_{t}(x,t)=\mathcal{K}(\tilde{v})(x,t)-\mathcal{K}_{\epsilon}\tilde{v}(x,t)+\mathcal{K}_{\epsilon}w^{\epsilon}(x,t),\qquad & x\in \Omega,\quad t>0,\\
w^{\epsilon}(x,t)=g(x,t)-\tilde{g}(x,t), & x\notin \Omega,\quad t>0,\\
w^{\epsilon}(x,0)=0, & x\in\Omega.
\end{cases}
\end{equation}
From Lemma \ref{lem:cuentas} exists a constant $K_1$ dependent only of $\tilde v$ and the differential operator $\mathcal{K}$ such that for all $\epsilon>0$
\begin{align*}
\left|\mathcal{K}\tilde{v}(x,t)-\mathcal{K}_{\epsilon}\tilde{v}(x,t)\right|\leq K_1\epsilon^\alpha.
\end{align*}

Let $\overline{w}(x,t)=K_{1}\epsilon^\alpha t+K_{2}\epsilon$, where $K_2>0$ is a constant independent of $\epsilon$ to be chosen later. Now we see that $\overline{w}(x,t)$ is a supersolution of the problem  \eqref{5.2}. Since $\overline{w}(x,t)$ does not depend on $x$, we have 
\begin{align*}
 \mathcal{K}_{\epsilon}\overline{w}(x,t)=\int_{G}{K}_{\epsilon}(x,y)(\overline{w}(y,t)-\overline{w}(x,t))\,dy=0, 
\end{align*}
and follows that 
\begin{align*}
\overline{w}_{t}(x,t)=K_{1}\epsilon^\alpha \ge\mathcal{K}\tilde{v}(x,t)-\mathcal{K}_{\epsilon}\tilde{v}(x,t)+\mathcal{K}_{\epsilon}\overline{w}(x,t).
\end{align*}


Also, $\overline{w}(x,0)>0$ and by the definition of $\tilde{g}$, we can choose $K_{2}>0$ such that
\begin{align*}
\overline{w}(x,t)\ge K_{2}\epsilon \ge O(\epsilon),
\end{align*}
for $x\in\Omega^{c}$, $x\approx\partial\Omega$, $t>0$. Hence $\overline{w}$ is indeed a supersolution of \eqref{5.2}.

\par From the comparison principle (Corollary \ref{comp2}) we get that $\tilde{v}-u^{\epsilon}\le \overline{w}(x,t)=K_{1}\epsilon^\alpha t+K_{2}\epsilon$.
\par Applying the same arguments for $\underline{w}(x,t)=-\overline{w}(x,t)$ we obtain that $\underline{w}(x,t)$ is a subsolution of problem \eqref{5.2} and again by the comparison principle, 
\begin{align*}
-K_{1}\epsilon^\alpha t-K_{2}\epsilon\le\tilde{v}-u^{\epsilon}\le K_{1}\epsilon^\alpha t+K_{2}\epsilon.
\end{align*}
Therefore,
\begin{align*}
||\tilde{v}-u^{\epsilon}||_{L^{\infty}(\Omega\times[0,T])} \le K_{1}\epsilon^\alpha T+K_{2}\epsilon \to 0.
\end{align*}
\end{proof}


%
%



\end{document}